\documentclass{article}

\usepackage{arxiv}

\usepackage[utf8]{inputenc} 
\usepackage[T1]{fontenc}    
\usepackage{hyperref}       
\usepackage{url}            
\usepackage{booktabs}       
\usepackage{amsmath}
\usepackage{mathrsfs}
\usepackage{amsfonts}       
\usepackage{nicefrac}       
\usepackage{microtype}      
\usepackage{graphicx}
\usepackage{natbib}
\usepackage{doi}
\usepackage{bm}
\usepackage{bbm}
\usepackage{enumitem}

\title{Entropic Strict Minimum Message Length and Its Connections to PAC--Bayes and NML
}

\author{ \href{https://orcid.org/0000-0003-3017-0871}{\includegraphics[scale=0.06]{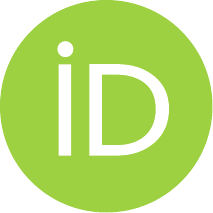}\hspace{1mm}Enes Makalic}
    \\
    Faculty of Information Technology\\
	Monash University\\
	Clayton, VIC 3800 \\
	\texttt{enes.makalic@monash.edu} \\
	\And
	\href{https://orcid.org/0000-0002-1788-2375}{\includegraphics[scale=0.06]{orcid.pdf}\hspace{1mm}Daniel F.~Schmidt} \\
    Faculty of Information Technology\\
	Monash University\\
	Clayton, VIC 3800 \\
	\texttt{daniel.schmidt@monash.edu} \\
}

\DeclareMathOperator*{\argmin}{arg\,min}
\DeclareMathOperator*{\argmax}{arg\,max}

\newtheorem{lemma}{Lemma}
\newtheorem{corollary}{Corollary}
\newtheorem{proposition}{Proposition}
\newtheorem{assumption}{Assumption}
\newtheorem{theorem}{Theorem}
\newtheorem{remark}{Remark}

\newenvironment{proof}[1][Proof]{\par\noindent\textbf{#1.} }{\hfill$\square$\par}

\renewcommand{\shorttitle}{Entropic SMML and Its Connections to PAC--Bayes and NML}

\hypersetup{
pdftitle={The Wallace--Freeman invariant point estimator},
pdfsubject={stat.ME, stat.TH},
pdfauthor={Enes Makalic, Daniel F.~Schmidt},
pdfkeywords={Maximum likelihood, Minimum Message Length, Bias, 
Consistency, Weibull distribution},
}

\begin{document}
\maketitle

\begin{abstract}
We introduce entropic strict minimum message length (SMML), a risk-sensitive generalization of strict minimum message length coding. The proposed criterion replaces expected two-part codelength under the prior predictive distribution with an exponential certainty equivalent, thereby defining a one-parameter family of coding rules that interpolates between Bayesian average-case coding and worst-case minimax coding. We show that ordinary SMML is recovered in the risk-neutral limit, while the extreme risk-sensitive limit yields a minimax codelength criterion.
Applying the same entropic soft maximum to regret
relative to the oracle maximum likelihood codelength recovers the normalized maximum likelihood (NML) minimax-regret principle.
We further prove that entropic SMML admits a variational characterization as a Kullback--Leibler-regularized worst-case expected codelength, giving it a PAC--Bayes-type interpretation. 
We establish joint \(n\)--\(\tau\) asymptotics that identify how the risk parameter must scale with sample size in order to recover Bayesian average-case, intermediate robust, and worst-case minimax coding behavior.
For regular exponential families, the fixed-codebook partition remains affine in sufficient-statistic space, while the codepoints satisfy a tilted moment-matching condition and admit an interpretation as tilted Bregman centroids. These results position entropic SMML as an information theoretic bridge between MML, PAC--Bayes, and MDL.
\end{abstract}

\keywords{minimum description length \and normalized maximum likelihood \and minimum message length \and PAC--Bayes \and universal coding \and exponential families \and information geometry}
\section{Introduction}
\label{sec:introduction}
Minimum message length (MML)~\cite{WallaceBoulton68,WallaceBoulton75,WallaceFreeman87,Wallace05} and minimum description length (MDL)~\cite{Rissanen78,Rissanen84,Rissanen96,Rissanen01,Rissanen07,Grunwald07,GrunwaldRoos19} are two of the most influential coding-based approaches to statistical inference and model selection. Both arise from the principle that learning may be viewed as compression, yet they differ in how uncertainty and optimality are formalized. In its strict two-part form, strict minimum message length (SMML)~\cite{WallaceBoulton75,Wallace05,MakalicSchmidt26b} builds a finite codebook by minimizing the \emph{expected} codelength under the prior predictive distribution, and is therefore inherently Bayesian and average-case in nature. By contrast, normalized maximum likelihood (NML)~\cite{Rissanen96,Rissanen07,GrunwaldRoos19}, the canonical code in MDL, is characterized by a minimax regret principle and optimizes worst-case performance relative to the maximum likelihood (ML) code. Although these paradigms are often presented as reflecting different statistical philosophies, both are coding rules for assigning codelength to data.

This paper introduces a coding principle that bridges these regimes by replacing the expected codelength in strict MML with an \emph{entropic}, or risk-sensitive, certainty equivalent. The resulting criterion is indexed by a risk parameter \(\tau>0\): small values of \(\tau\) recover average-case Bayesian behavior, while larger values increasingly penalize rare but costly codewords.
This entropic deformation is natural from several perspectives. First, exponential certainty equivalents are canonical in risk-sensitive decision theory, statistical physics, large deviations, and the theory of entropic risk measures, where they provide a smooth interpolation between expectation and maximization~\cite{DonskerVaradhan75,FollmerKnispel11}. Second, the new criterion admits a variational representation in terms of Kullback--Leibler (KL) divergence~\cite{KullbackLeibler51}, namely the Gibbs--Donsker--Varadhan variational formula~\cite{DonskerVaradhan75,Catoni07}, and therefore has a PAC--Bayes-type interpretation as a KL-regularized worst-case expected codelength over prior predictive tilts. Third, in the extreme risk-sensitive limit, the criterion reduces to a minimax codelength objective. After centering codelengths by the oracle maximum likelihood code, the same entropic construction, when applied to regret rather than absolute codelength and optimized over all coding distributions, recovers the NML minimax regret principle.

The geometric motivation for this construction comes from recent asymptotic results for ordinary SMML in regular parametric models~\cite{MakalicSchmidt26b}. The present paper shows that information-geometric structure of ordinary SMML is preserved under the entropic deformation, even though the coding criterion is altered. 

The main contributions of the paper are fourfold. We first derive a KL-variational representation of the entropic objective, giving it a PAC--Bayesian interpretation as a KL-regularized robust codelength criterion. We then characterize its endpoint limits, distinguishing the large-\(\tau\) worst-case absolute-codelength limit from the regret-centered limit that recovers NML. Next, we establish joint \(n\)--\(\tau\) asymptotics describing the transition between Bayesian, robust, and minimax regimes. Finally, for regular exponential families, we show that fixed-codebook partitions remain affine in sufficient-statistic space and that the codepoints are tilted Bregman centroids~\cite{BanerjeeEtAl05}.

The remainder of the paper is organized as follows. Section~\ref{sec:smml} reviews ordinary SMML and introduces the entropic criterion. Section~\ref{sec:var:pac:bayes} gives the variational and PAC--Bayesian characterization of entropic SMML. Section~\ref{sec:smml:nml} establishes the endpoint limits connecting entropic SMML to ordinary SMML and to NML. Section~\ref{sec:n:tau:asymptotics} states the joint $n$--$\tau$ asymptotic theorem. Section~\ref{sec:exponential:families} specializes the theory to regular exponential families,  derives the tilted Bregman centroid interpretation, and applies the theory to the binomial distribution as an example. Section~\ref{sec:conclusion} concludes with a discussion of implications for coding-based statistical inference.
\section{Strict Minimum Message Length}
\label{sec:smml}
This section fixes notation and defines the two coding criteria studied in the paper. We work throughout with a countable data space $\mathcal X_n$, as in the strict minimum message length (SMML) framework~\cite{WallaceBoulton75,Wallace05,MakalicSchmidt26b}. For the ordinary SMML criterion, the basic objects are a finite partition of $\mathcal X_n$, a collection of codepoints, and the resulting two-part codelength under the prior predictive distribution. The ordinary SMML setup and its KL-projection interpretation are reviewed in~\cite{MakalicSchmidt26b}. 

Let $X_{n}$ denote a random dataset of size $n$, taking values in the countable space $\mathcal X_n$, and let $x\in\mathcal X_n$ denote a realization. Let $\{p_n(x\mid\theta):\theta\in\Theta\}$ be a parametric model with prior $\pi(\theta)$, and define the prior predictive distribution  
\begin{align}
\label{eqn:prior-predictive}
    r_n(x) = \int_\Theta p_n(x\mid\theta) \pi(\theta) d\theta.
\end{align}
We assume throughout that $r_n(x)>0$ for all $x\in\mathcal X_n$. Let $\mathcal C_n$ denote the class of admissible finite codebooks at sample size $n$. An admissible finite codebook is a triple $(\mathcal P,q,\theta) \in \mathcal{C}_n$, where
\begin{itemize}
\item $\mathcal P=\{P_1,\dots,P_k\}$ is a finite partition of $\mathcal X_n$ into nonempty cells, 
\item $q=(q_1,\dots,q_k)$ satisfies $q_j>0$ and $\sum_{j=1}^k q_j=1$, 
\item $\theta=(\theta_1,\dots,\theta_k)$ contains one codepoint $\theta_j\in\Theta$ per cell.
\end{itemize}
We write $j(x)$ for the unique index such that $x\in P_{j(x)}$.
Given $(\mathcal P,q,\theta)\in\mathcal C_n$, the associated two-part SMML codelength is
\begin{align}
\label{eqn:two-part-code}
\Lambda_{\mathcal P,q,\theta}(x)
=
-\log q_{j(x)}-\log p_n(x\mid \theta_{j(x)}).
\end{align}
The first term encodes the cell index (the \emph{assertion}), and the second encodes the data conditional on the selected codepoint (the \emph{detail}). The ordinary SMML objective is the expected codelength under the prior predictive law:
\begin{align}
\label{eqn:ordinary-smml-objective}
\mathcal I_n(\mathcal P,q,\theta)
=
\mathbb E_{r_n}
\!\left[
\Lambda_{\mathcal P,q,\theta}(X_{n})
\right].
\end{align}
An SMML codebook is any minimizer
\begin{align}
\label{eqn:ordinary-smml-argmin}
(\mathcal P_n^{*},q_n^{*},\theta_n^{*})
\in
\argmin_{(\mathcal P,q,\theta)\in\mathcal C_n}
\mathcal I_n(\mathcal P,q,\theta).
\end{align}

The codebook formulation in \eqref{eqn:ordinary-smml-argmin} is equivalent to the classical partition-based formulation of SMML. Indeed, for fixed $(\mathcal P,\theta)$, the minimizing assertion probabilities are
\begin{align}
\label{eqn:ordinary-q-opt}
q_j^*
=
r_n(P_j)
:=
\sum_{x\in P_j} r_n(x),
\qquad j=1,\dots,k,
\end{align}
and for fixed $(\mathcal P,q)$, each cellwise codepoint solves
\begin{align}
\label{eqn:ordinary-codepoint}
\theta_j^*
\in
\argmax_{\theta\in\Theta}
\sum_{x\in P_j} r_n(x)\log p_n(x\mid\theta),
\quad j=1,\dots,k.
\end{align}
Equivalently, each ordinary SMML codepoint is the KL projection of the normalized cellwise distribution onto the model family~\cite{MakalicSchmidt26b}.
\subsection{Entropic Strict Minimum Message Length}
We now introduce the entropic, or risk-sensitive, generalization of SMML. For $\tau>0$, define the entropic SMML objective
\begin{align}
\label{eqn:entropic-smml-objective}
\mathcal I_{n,\tau}(\mathcal P,q,\theta)
=
\frac{1}{\tau}
\log
\mathbb E_{r_n}
\!\left[
\exp\!\big(
\tau\,\Lambda_{\mathcal P,q,\theta}(X_{n})
\big)
\right].
\end{align}
An \emph{entropic SMML} codebook is any minimizer
\begin{align}
\label{eqn:entropic-smml-argmin}
(\mathcal P_{n,\tau}^*,q_{n,\tau}^*,\theta_{n,\tau}^*)
\in
\argmin_{(\mathcal P,q,\theta)\in\mathcal C_n}
\mathcal I_{n,\tau}(\mathcal P,q,\theta).
\end{align}
The parameter $\tau$ controls sensitivity to large codelengths. For small $\tau$, the criterion in \eqref{eqn:entropic-smml-objective} behaves like the ordinary SMML expected codelength. For large $\tau$, it increasingly emphasizes the upper tail of the codelength distribution and approaches a worst-case coding principle. 

For a fixed codebook, the partition rule remains pointwise minimum codelength:
\begin{align}
\label{eqn:fixed-codebook-partition}
x\in P_j
\iff
j\in
\argmin_{1\le \ell\le k}
\left\{
-\log q_\ell-\log p_n(x\mid\theta_\ell)
\right\},
\end{align}
so the entropic deformation does not alter the local decision rule, only the global criterion used to choose the codebook.

For a fixed partition $\mathcal P=\{P_1,\dots,P_k\}$, define
\begin{align}
\label{eqn:Ajtau}
C_{j,\tau}(\theta)
:=
\sum_{x\in P_j}
r_n(x)\,p_n(x\mid\theta)^{-\tau},
\qquad j=1,\dots,k.
\end{align}
Then \eqref{eqn:entropic-smml-objective} may be rewritten as
\begin{align}
\label{eqn:entropic-fixed-partition}
\mathcal I_{n,\tau}(\mathcal P,q,\theta)
=
\frac{1}{\tau}
\log
\sum_{j=1}^k
q_j^{-\tau} C_{j,\tau}(\theta_j),
\end{align}
and the fixed-partition codepoint optimization becomes
\begin{align}
\label{eqn:entropic-codepoint}
\theta_{j,\tau}^*
\in
\argmin_{\theta\in\Theta}
C_{j,\tau}(\theta)
=
\argmin_{\theta\in\Theta}
\sum_{x\in P_j}
r_n(x)\,p_n(x\mid\theta)^{-\tau},
\end{align}
for $j=1,\dots,k$. This is the risk-sensitive analogue of the ordinary SMML cellwise fit in \eqref{eqn:ordinary-codepoint}. The corresponding fixed-partition assertion probabilities are derived in the next section.
\subsection{Connection to generalized source coding}
The entropic SMML criterion defined in \eqref{eqn:entropic-smml-objective} is closely related to Campbell's exponential
source coding criterion~\cite{Campbell65} where the ordinary expected code length is replaced by an exponential average of the form
\[
  L_t(\ell)
  =
  \frac{1}{t}
  \log
  \sum_i p_i e^{t\ell_i},
  \qquad t>0,
\]
where \(p_i\) is the source probability of symbol \(i\) and \(\ell_i\) is the corresponding codeword length. Campbell showed that the optimal value of this criterion is governed by R\'{e}nyi entropy~\cite{Renyi61} with order \(\alpha=(1+t)^{-1}\), providing an operational source coding interpretation of R\'{e}nyi entropy that is analogous to Shannon's theorem for ordinary expected length.

The entropic SMML criterion may be viewed as the same exponential-type deformation applied not to arbitrary codeword lengths, but to SMML two-part codelengths. Indeed, for a fixed admissible codebook \((\mathcal P,q,\theta)\), define 
\[
  Q_{\mathcal P,q,\theta}(x)
  =
  q_{j(x)}p_n(x\mid\theta_{j(x)}),
\]
so that
\[
  \Lambda_{\mathcal P,q,\theta}(x)
  =
  -\log Q_{\mathcal P,q,\theta}(x).
\]
Then the entropic SMML objective can be written as
\[
  \mathcal I_{n,\tau}(\mathcal P,q,\theta)
  =
  \frac{1}{\tau}
  \log
  \sum_{x\in\mathcal X_n}
  r_n(x)
  Q_{\mathcal P,q,\theta}(x)^{-\tau},
\]
which is Campbell's exponential-average length criterion with the source distribution \(p_i\) replaced by the prior predictive distribution \(r_n(x)\), and with ordinary codeword lengths replaced by SMML two-part codelengths.  If optimization were carried out over all probability distributions \(Q\) on \(\mathcal X_n\), the optimal value would be
\[
  \inf_Q
  \frac{1}{\tau}
  \log
  \sum_x r_n(x)Q(x)^{-\tau}
  =
  H_{1/(1+\tau)}(r_n),
\]
where
\[
  H_\alpha(r_n)
  =
  \frac{1}{1-\alpha}
  \log
  \sum_x r_n(x)^\alpha,
  \qquad
  \alpha=\frac{1}{1+\tau},
\]
is the R\'{e}nyi entropy~\cite{Renyi61}. Since SMML restricts \(Q\) to the two-part statistical form \(Q_{\mathcal P,q,\theta}(x)=q_{j(x)}p_n(x\mid\theta_{j(x)})\),  we have
\[
  \mathcal I_{n,\tau}(\mathcal P,q,\theta)
  \ge
  H_{1/(1+\tau)}(r_n)
\]
for every admissible codebook. Thus ordinary SMML is lower bounded by Shannon entropy, while entropic SMML is lower bounded by the corresponding R\'{e}nyi entropy for finite \(\tau>0\).

This viewpoint clarifies the role of the risk parameter. Campbell's exponential length criterion~\cite{Campbell65} penalizes long codewords more strongly than ordinary expected length; analogously, entropic SMML penalizes large two-part statistical codelengths. As \(\tau\downarrow0\), the criterion reduces to the ordinary prior-predictive expected codelength. As \(\tau\to\infty\), it approaches a worst-case codelength criterion. Thus entropic SMML can be interpreted as lifting the exponential source coding principle from ordinary source codes to structured statistical two-part codes. 

\begin{remark}[Escort interpretation]
Campbell's exponential source coding theorem~\cite{Campbell65} admits a natural interpretation in terms of escort distributions~\cite{Bercher09,Csiszar95}. In the unconstrained coding problem, minimizing the exponential-average codelength
\[
  \frac{1}{\tau}
  \log
  \sum_x r_n(x)Q(x)^{-\tau}
\]
over all probability distributions \(Q\) yields the optimal solution
\[
  Q_\tau^*(x)
  =
  \frac{r_n(x)^{1/(1+\tau)}}
       {\sum_y r_n(y)^{1/(1+\tau)}},
\]
which is the escort distribution of the prior predictive law \(r_n\) of order \(\alpha=(1+\tau)^{-1}\). Entropic SMML imposes the additional structural constraint that the coding distribution factorizes as
\[
  Q_{\mathcal P,q,\theta}(x)
  =
  q_{j(x)}p_n(x\mid\theta_{j(x)}),
\]
so that the entropic SMML problem may be viewed as a constrained escort-coding problem. The corresponding redundancy
\[
  \Delta_{n,\tau}^{\mathrm{SMML}}
  =
  \mathcal I_{n,\tau}^{\mathrm{SMML}}
  -
  H_{1/(1+\tau)}(r_n)
\]
can be interpreted as the R\'{e}nyi divergence projection error of the escort distribution onto the SMML two-part code class. Within each cell, the tilted moment-matching weights
\[
  w_{j,\tau}(x;\theta)
  \propto
  r_n(x)p_n(x\mid\theta)^{-\tau}
\]
induce a local, model-dependent escort tilt of the prior predictive
distribution. 
\end{remark}

The next section gives a complementary variational interpretation of entropic SMML. The same exponential criterion is equivalent to a Kullback--Leibler (KL)-regularized~\cite{KullbackLeibler51} worst-case expected codelength over prior predictive tilts.
\section{Variational and PAC--Bayesian Characterization}
\label{sec:var:pac:bayes}
We now give the variational representation of entropic SMML and derive its PAC--Bayesian interpretation. The key observation is that the exponential certainty equivalent in~\eqref{eqn:entropic-smml-objective} admits the Gibbs--Donsker--Varadhan KL dual form, turning entropic SMML into a KL-regularized robust codelength criterion~\cite{DonskerVaradhan75,Catoni07}. We then use this representation to derive the optimal assertion probabilities for a fixed partition.

Let $\mathcal M(\mathcal X_n)$ denote the set of probability distributions on the countable data space $\mathcal X_n$, and for $s,r\in\mathcal M(\mathcal X_n)$ write $s\ll r$ if $s$ is absolutely continuous with respect to $r$. For such $s$ and $r$, define the KL divergence~\cite{KullbackLeibler51}
\begin{align}
\label{eqn:kl-def}
D_{\mathrm{KL}}(s\|r)
=
\sum_{x\in\mathcal X_n}
s(x)\log \frac{s(x)}{r(x)},
\end{align}
with the usual convention that $0\log 0 = 0$. The following theorem is the basic variational identity underlying the entropic criterion.

\begin{theorem}[Variational representation of entropic SMML]
\label{thm:variational}
For any admissible codebook $(\mathcal P,q,\theta)\in\mathcal C_n$ and any $\tau>0$ such that
\[
Z_{n,\tau}:=\mathbb{E}_{r_n}\exp\{\tau\Lambda_{\mathcal P,q,\theta}(X_n)\}<\infty,
\]
$\mathcal I_{n,\tau}(\mathcal P,q,\theta)$ can be written as
\begin{align}
\label{eqn:variational-smml}
\sup_{s\in\mathcal M(\mathcal X_n):\, s\ll r_n}
\left\{
\mathbb E_s\!\left[\Lambda_{\mathcal P,q,\theta}(X_{n})\right]
-
\frac{1}{\tau}D_{\mathrm{KL}}(s\|r_n)
\right\}.
\end{align}
Moreover, the supremum is attained by the exponentially tilted distribution
\begin{align}
\label{eqn:tilted-measure}
s_{n,\tau}^*(x)
=
\frac{
r_n(x)\exp\!\big(\tau \Lambda_{\mathcal P,q,\theta}(x)\big)
}{
\sum_{y\in\mathcal X_n}
r_n(y)\exp\!\big(\tau \Lambda_{\mathcal P,q,\theta}(y)\big)
},
\quad x\in\mathcal X_n.
\end{align}
\end{theorem}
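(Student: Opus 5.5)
The plan is the standard Gibbs--Donsker--Varadhan argument, carried out directly on the countable space $\mathcal X_n$. Write $\Lambda=\Lambda_{\mathcal P,q,\theta}$ for brevity. Since $q_j>0$ and $p_n(x\mid\theta_j)\le 1$ on a countable space, $\Lambda(x)\in[0,\infty]$, with $\Lambda(x)=\infty$ exactly when $p_n(x\mid\theta_{j(x)})=0$. The hypothesis $Z_{n,\tau}<\infty$ together with $r_n(x)>0$ forces $\Lambda(x)<\infty$ for every $x$. Hence $s^*_{n,\tau}$ in \eqref{eqn:tilted-measure} is a well-defined probability distribution, it is equivalent to $r_n$, and $\log Z_{n,\tau}=\tau\,\mathcal I_{n,\tau}(\mathcal P,q,\theta)$.

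\textbf{Upper bound.} Fix $s\ll r_n$. If $\mathbb E_s[\Lambda]=\infty$, I need to check that $D_{\mathrm{KL}}(s\|r_n)=\infty$ as well; it is cleaner to avoid this by working with the identity below. On the support of $s$,
\[
\log\frac{s(x)}{r_n(x)}=\log\frac{s(x)}{s^*_{n,\tau}(x)}+\tau\Lambda(x)-\log Z_{n,\tau}.
\]
Averaging under $s$ gives
\[
\mathbb E_s[\Lambda]-\tfrac1\tau D_{\mathrm{KL}}(s\|r_n)=\tfrac1\tau\log Z_{n,\tau}-\tfrac1\tau D_{\mathrm{KL}}(s\|s^*_{n,\tau}).
\]
Since $D_{\mathrm{KL}}\ge0$ by Gibbs' inequality, the objective in \eqref{eqn:variational-smml} is at most $\mathcal I_{n,\tau}$. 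Equality holds iff $s=s^*_{n,\tau}$, which gives attainment.

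\textbf{Main obstacle.} The delicate step is justifying the averaging of the identity when some of the series diverge, so as to avoid an $\infty-\infty$ situation. I would handle it as follows. The negative part of $\log(s/s^*_{n,\tau})$ has finite $s$-expectation, since $\sum_x s\,(\log(s^*/s))_+\le\sum_x s^*\le1$ by $\log u\le u-1$. Also $\Lambda\ge0$. Combining these, every term is either summable or $+\infty$ with a definite sign. If $D_{\mathrm{KL}}(s\|r_n)=\infty$, I would use the convention that the bracket equals $-\infty$ whenever $\mathbb E_s\Lambda<\infty$. In the remaining case, where both $\mathbb E_s\Lambda$ and $D_{\mathrm{KL}}(s\|r_n)$ are infinite, I would adopt the convention that such $s$ are excluded, or equivalently restrict to $D_{\mathrm{KL}}(s\|r_n)<\infty$. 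I would then show that $\mathbb E_s\Lambda<\infty$ follows automatically from the Young-type inequality $\tau\Lambda\le \log(s/r_n)+ s^{-1}r_n e^{\tau\Lambda}$ applied pointwise, which gives $\tau\mathbb E_s\Lambda\le D_{\mathrm{KL}}(s\|r_n)+Z_{n,\tau}$.

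\textbf{Attained value.} Finally I would verify that the value at $s^*_{n,\tau}$ is finite and equals $\tau^{-1}\log Z_{n,\tau}$. From the identity this reduces to showing $D_{\mathrm{KL}}(s^*_{n,\tau}\|r_n)<\infty$. Directly, $D_{\mathrm{KL}}(s^*_{n,\tau}\|r_n)=\tau\mathbb E_{s^*}\Lambda-\log Z_{n,\tau}$, and $\mathbb E_{s^*}\Lambda=Z_{n,\tau}^{-1}\sum_x r_n\Lambda e^{\tau\Lambda}$. This quantity can diverge in general, so if needed I would note that the supremum value is still $\tau^{-1}\log Z_{n,\tau}$, approached along truncations of $s^*$ to finite sets. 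I would flag this as the one point where an extra integrability remark, $\mathbb E_{r_n}[\Lambda e^{\tau\Lambda}]<\infty$, may be needed for literal attainment.
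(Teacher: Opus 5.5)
Your proposal is correct and follows the paper's proof: both rest on the identity $\mathbb E_s[\Lambda]-\tau^{-1}D_{\mathrm{KL}}(s\|r_n)=\tau^{-1}\log Z_{n,\tau}-\tau^{-1}D_{\mathrm{KL}}(s\|s^*_{n,\tau})$, followed by nonnegativity of KL. Your extra bookkeeping is a sound refinement that the paper's proof skips: the bound $\tau\mathbb E_s\Lambda\le D_{\mathrm{KL}}(s\|r_n)+Z_{n,\tau}$, and the caveat that literal attainment at $s^*_{n,\tau}$ needs $\mathbb E_{r_n}[\Lambda e^{\tau\Lambda}]<\infty$ unless the bracket is read as the single series $\tau^{-1}\sum_x s(x)\log\{r_n(x)e^{\tau\Lambda(x)}/s(x)\}$, as the paper tacitly does; that extra condition holds automatically when $\mathcal X_n$ is finite or $\tau$ lies in the interior of the moment domain.
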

\begin{proof}
Fix $(\mathcal P,q,\theta)\in\mathcal C_n$ and write $\Lambda(x)=\Lambda_{\mathcal P,q,\theta}(x)$. For any $s\in\mathcal M(\mathcal X_n)$ with $s\ll r_n$, $\mathbb E_s[\Lambda(X_{n})] - \frac{1}{\tau}D_{\mathrm{KL}}(s\|r_n)$ can be written as
\begin{align}
\sum_{x\in\mathcal X_n}
s(x)\Lambda(x)
-
\frac{1}{\tau}
\sum_{x\in\mathcal X_n}
s(x)\log \frac{s(x)}{r_n(x)}
=
\frac{1}{\tau}
\sum_{x\in\mathcal X_n}
s(x)
\log
\frac{r_n(x)e^{\tau \Lambda(x)}}{s(x)}.
\label{eqn:variational-proof-1}
\end{align}
Define
\begin{align}
\label{eqn:Zn}
Z_{n,\tau}
:=
\sum_{y\in\mathcal X_n}
r_n(y)e^{\tau \Lambda(y)}.
\end{align}
Then
\begin{align}
\frac{r_n(x) e^{\tau\Lambda(x)}}{s(x)}
=
Z_{n,\tau}\,
\frac{s_{n,\tau}^*(x)}{s(x)},
\label{eqn:variational-proof-2}
\end{align}
where $s_{n,\tau}^*$ is given by \eqref{eqn:tilted-measure}. Substituting \eqref{eqn:variational-proof-2} into \eqref{eqn:variational-proof-1} yields 
\begin{align}
\mathbb E_s[\Lambda]
-
\frac{1}{\tau}D_{\mathrm{KL}}(s\|r_n)
=
\frac{1}{\tau}\log Z_{n,\tau}
-
\frac{1}{\tau}D_{\mathrm{KL}}(s\|s_{n,\tau}^*).
\label{eqn:variational-proof-3}
\end{align}
Since KL divergence is nonnegative, the right-hand side is maximized when $s=s_{n,\tau}^*$, proving
\[
\sup_{s\ll r_n}
\left\{
\mathbb E_s[\Lambda]
-
\frac{1}{\tau}D_{\mathrm{KL}}(s\|r_n)
\right\}
=
\frac{1}{\tau}\log Z_{n,\tau}.
\]
By \eqref{eqn:entropic-smml-objective}, the latter equals
$\mathcal I_{n,\tau}(\mathcal P,q,\theta)$, proving the claim.
\end{proof}
Theorem~\ref{thm:variational} shows that entropic SMML can be interpreted as the optimization of the worst-case expected codelength over all data distributions $s$ that remain close to the prior predictive distribution $r_n$ in KL divergence. The parameter $\tau$ controls the strength of this robustness penalty: small $\tau$ enforces $s \approx r_n$, whereas large $\tau$ permits the tilted distribution to concentrate increasingly on high-codelength events. Closely related entropy-regularized minimax formulations also appear in entropic risk measurement and robust Bayesian decision theory, where worst-case expected loss is penalized by relative entropy with respect to a reference distribution; see Berger~\cite{Berger85} and F\"{o}llmer and Knispel~\cite{FollmerKnispel11}.
\subsection{PAC--Bayesian Interpretation}
\label{sec:pac:bayes}
The variational form from Theorem~\ref{thm:variational} immediately yields a PAC--Bayesian codelength bound. 
\begin{corollary}[PAC--Bayes-type codelength bound]
\label{cor:pac-bayes}
For any admissible codebook $(\mathcal P,q,\theta)\in\mathcal C_n$, any $\tau>0$, and any distribution $s\in\mathcal M(\mathcal X_n)$ with $s\ll r_n$,
\begin{align}
\label{eqn:pac-bayes-bound}
\mathbb{E}_s\!\left[\Lambda_{\mathcal P,q,\theta}(X_{n})\right]
\le
\mathcal I_{n,\tau}(\mathcal P,q,\theta)
+
\frac{1}{\tau} D_{\mathrm{KL}}(s\|r_n).
\end{align}
\end{corollary}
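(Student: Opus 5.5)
The bound follows directly from Theorem~\ref{thm:variational}. I would split into two cases according to whether $Z_{n,\tau}=\mathbb E_{r_n}\exp\{\tau\Lambda_{\mathcal P,q,\theta}(X_n)\}$ is finite.

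\textbf{Case $Z_{n,\tau}<\infty$.} Theorem~\ref{thm:variational} identifies $\mathcal I_{n,\tau}(\mathcal P,q,\theta)$ with the supremum in \eqref{eqn:variational-smml} over all $s\ll r_n$. Any particular $s\ll r_n$ is feasible for that supremum, so
\[
\mathbb E_s[\Lambda_{\mathcal P,q,\theta}]-\tfrac{1}{\tau}D_{\mathrm{KL}}(s\|r_n)\le \mathcal I_{n,\tau}(\mathcal P,q,\theta).
\]
Rearranging gives \eqref{eqn:pac-bayes-bound}. A more explicit route is the identity \eqref{eqn:variational-proof-3} together with $D_{\mathrm{KL}}(s\|s^*_{n,\tau})\ge0$.

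\textbf{Care with infinite terms.} Before rearranging, I need to rule out $\infty-\infty$. Suppose $D_{\mathrm{KL}}(s\|r_n)=\infty$. Then the right-hand side of \eqref{eqn:pac-bayes-bound} is $+\infty$ and there is nothing to prove. Otherwise $D_{\mathrm{KL}}(s\|r_n)<\infty$, and I would handle the terms as follows.

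\begin{itemize}
\item Since $\Lambda\ge0$ pointwise, $\mathbb E_s[\Lambda]$ is well defined in $[0,\infty]$.
\item The computation in \eqref{eqn:variational-proof-1}--\eqref{eqn:variational-proof-3} is a sum of nonnegative-part series. It remains valid as long as the KL terms are finite.
\item $D_{\mathrm{KL}}(s\|s^*_{n,\tau})$ is then finite or $+\infty$. If it is $+\infty$, the identity would give $\mathbb E_s[\Lambda]=-\infty$, which is impossible. So that subcase does not arise, and both sides are finite.
\end{itemize}

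The main point needing care is justifying the splitting of the series in \eqref{eqn:variational-proof-1} when $\mathcal X_n$ is countably infinite. I would handle it by writing the summand as $s\log(s/r_n)$ plus $-\tau s\Lambda$ and using that the negative part of $s\log(s/r_n)$ is summable. The latter holds by the standard bound $s\log(s/r)\ge s-r$.

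\textbf{Case $Z_{n,\tau}=\infty$.} Here $\mathcal I_{n,\tau}=+\infty$ by definition \eqref{eqn:entropic-smml-objective}. The inequality then holds trivially, since the left-hand side is a well-defined element of $[0,\infty]$.
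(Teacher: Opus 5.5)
Your proof is correct and takes the same route as the paper, which simply evaluates the supremum in Theorem~\ref{thm:variational} at the given $s$ and rearranges. Your extra bookkeeping is also correct and tightens the paper's one-line argument: the case $Z_{n,\tau}=\infty$ is needed because the corollary is stated for any $\tau>0$ while Theorem~\ref{thm:variational} assumes $Z_{n,\tau}<\infty$, and you also handle infinite KL values and the splitting of series on a countably infinite $\mathcal X_n$.
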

\begin{proof} This follows immediately from \eqref{eqn:variational-smml} by evaluating the supremum at the given distribution $s$.
\end{proof}
Corollary~\ref{cor:pac-bayes} identifies entropic SMML as a PAC--Bayes-type coding rule based on the standard KL change-of-measure inequality. Specifically, the entropic codelength is the smallest value that upper bounds expected codelength under every posterior tilt $s$, up to a KL complexity penalty relative to the prior predictive distribution. In this sense, entropic SMML is a KL-regularized robustification of ordinary SMML.
\subsection{Fixed-partition optimization}
The variational representation also clarifies the fixed-partition optimization. Let $\mathcal P=\{P_1,\dots,P_k\}$ be fixed, and define $C_{j,\tau}(\theta)$ as in \eqref{eqn:Ajtau}.
The next proposition gives the optimal assertion probabilities for fixed partition and codepoints.

\begin{proposition}[Optimal assertion probabilities for fixed partition]
\label{prop:q-opt}
Fix a partition $\mathcal P$ and codepoints $\theta=(\theta_1,\dots,\theta_k)$. Then any minimizer of \eqref{eqn:entropic-fixed-partition} over $q_1,\dots,q_k>0$ subject to $\sum_{j=1}^k q_j=1$ satisfies
\begin{align}
q_{j,\tau}^*
=
\frac{C_{j,\tau}(\theta_j)^{1/(1+\tau)}}
{\sum_{\ell=1}^k C_{\ell,\tau}(\theta_\ell)^{1/(1+\tau)}},
\qquad j=1,\dots,k.
\label{eqn:q-opt}
\end{align}
Substituting \eqref{eqn:q-opt} into \eqref{eqn:entropic-fixed-partition} yields the profiled entropic SMML criterion
\begin{align}
\mathcal I_{n,\tau}(\mathcal P,\theta)
=
\frac{1+\tau}{\tau}
\log
\left(
\sum_{j=1}^k
C_{j,\tau}(\theta_j)^{1/(1+\tau)}
\right).
\label{eqn:profiled-objective}
\end{align}
\end{proposition}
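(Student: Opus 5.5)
Since $\log$ is increasing and $\tau>0$, minimizing \eqref{eqn:entropic-fixed-partition} over $q$ is equivalent to minimizing
\[
F(q)=\sum_{j=1}^k q_j^{-\tau}C_j,\qquad C_j:=C_{j,\tau}(\theta_j),
\]
over the open simplex. We may assume each $C_j\in(0,\infty)$. Positivity holds because every cell is nonempty, $r_n>0$, and $p_n(x\mid\theta_j)^{-\tau}>0$ (with $+\infty$ if the density vanishes, in which case the objective is infinite for every $q$ and there is nothing to prove). Finiteness is the standing condition $Z_{n,\tau}<\infty$ of Theorem~\ref{thm:variational}.

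\textbf{Existence and stationarity.} Each map $q_j\mapsto q_j^{-\tau}$ is strictly convex on $(0,\infty)$, so $F$ is strictly convex on the open simplex. Moreover $F\to\infty$ as any $q_j\downarrow0$, so a minimizer exists in the interior and is unique. At the minimizer, Lagrange stationarity gives
\[
-\tau q_j^{-\tau-1}C_j=\lambda\quad\text{for all } j,
\]
hence $q_j\propto C_j^{1/(1+\tau)}$. Normalizing by $\sum_j q_j=1$ yields \eqref{eqn:q-opt}. Convexity makes this stationary point the global minimizer, so any minimizer has this form.

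\textbf{Alternative via H\"older.} Stationarity could be bypassed with a self-contained inequality. Write $C_j^{1/(1+\tau)}=(q_j^{-\tau}C_j)^{1/(1+\tau)}\,q_j^{\tau/(1+\tau)}$ and apply H\"older with exponents $1+\tau$ and $(1+\tau)/\tau$:
\[
\sum_j C_j^{1/(1+\tau)}\le\Big(\sum_j q_j^{-\tau}C_j\Big)^{1/(1+\tau)}\Big(\sum_j q_j\Big)^{\tau/(1+\tau)}.
\]
Since $\sum_j q_j=1$, this gives $F(q)\ge(\sum_j C_j^{1/(1+\tau)})^{1+\tau}$. Equality holds exactly when $q_j^{-\tau}C_j\propto q_j$, i.e.\ $q_j\propto C_j^{1/(1+\tau)}$, which again gives \eqref{eqn:q-opt} together with uniqueness.

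\textbf{Profiled criterion.} Let $S=\sum_\ell C_\ell^{1/(1+\tau)}$. Substituting \eqref{eqn:q-opt} gives
\[
q_j^{-\tau}C_j=S^{\tau}C_j^{1/(1+\tau)},\qquad F=S^{1+\tau},
\]
and therefore $\mathcal I_{n,\tau}=\tfrac{1+\tau}{\tau}\log S$, which is \eqref{eqn:profiled-objective}.

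The only real obstacle is the bookkeeping behind positivity and finiteness of the $C_j$, since the algebra itself is routine.
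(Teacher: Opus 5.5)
Your proposal is correct and follows the paper's route: reduce to minimizing $F(q)=\sum_j q_j^{-\tau}C_{j,\tau}(\theta_j)$ on the simplex, solve the Lagrange condition, normalize, and substitute back. Your strict-convexity and boundary blow-up argument, or equivalently the H\"older bound $F(q)\ge\bigl(\sum_j C_{j,\tau}(\theta_j)^{1/(1+\tau)}\bigr)^{1+\tau}$ with its equality case, supplies the existence, uniqueness and global optimality that the paper's first-order computation leaves implicit, and this is what justifies calling the substituted value the profiled minimum.
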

\begin{proof}
Since the logarithm is monotone, minimizing \eqref{eqn:entropic-fixed-partition} over $q$ is equivalent to minimizing
\[
F(q)
=
\sum_{j=1}^k q_j^{-\tau} C_{j,\tau}(\theta_j)
\]
subject to $\sum_j q_j=1$. The Lagrangian is
\[
\mathcal L(q,\lambda)
=
\sum_{j=1}^k q_j^{-\tau} C_{j,\tau}(\theta_j)
+
\lambda\Big(\sum_{j=1}^k q_j-1\Big),
\]
with first-order condition 
\[
-\tau C_{j,\tau}(\theta_j) q_j^{-\tau-1} + \lambda = 0,
\qquad j=1,\dots,k.
\]
This implies
\[
q_j
=
\left(\frac{\tau C_{j,\tau}(\theta_j)}{\lambda}\right)^{1/(1+\tau)}.
\]
Normalizing by $\sum_j q_j=1$ gives \eqref{eqn:q-opt}. Substituting the result back into $F(q)$ yields \eqref{eqn:profiled-objective}.
\end{proof}
Proposition~\ref{prop:q-opt} reduces the fixed-partition problem to the optimization of the profiled criterion \eqref{eqn:profiled-objective} over the codepoints alone. 
%
For fixed partition and codepoints with finite likelihood terms,  \eqref{eqn:q-opt} reduces to the ordinary SMML cell probabilities as \(\tau \downarrow 0\) (see \eqref{eqn:ordinary-q-opt}).
The corresponding endpoint limits are developed formally in the next section.
\section{SMML, Worst-Case Codelength, and Normalized Maximum Likelihood}
\label{sec:smml:nml}
This section establishes the two endpoint limits of the entropic SMML criterion. We show first that, as $\tau\downarrow 0$, entropic SMML reduces to the ordinary expected codelength criterion of strict minimum message length. We then show that, as $\tau\to\infty$, the objective converges to a worst-case codelength criterion. Since NML is characterized by minimax regret rather than by minimax absolute codelength, the precise connection to NML is obtained by applying the same soft-max limit to regret relative to the oracle maximum likelihood code. This is the classical setting of minimax-regret coding, as formalized by Shtarkov~\cite{Shtarkov87} and developed further by Rissanen~\cite{Rissanen07} and Gr\"{u}nwald~\cite{Grunwald07}.

Before proving the endpoint limits, we record a basic interpolation property of the entropic criterion. For each fixed codebook, the entropic codelength lies between the ordinary expected SMML codelength and the worst-case codelength, and it is monotone in the risk parameter $\tau$.
\begin{proposition}[Monotonicity and endpoint bounds]
Fix $n$ and an admissible codebook \((\mathcal P,q,\theta)\), and write
$
\Lambda(X_n)
=
\Lambda_{\mathcal P,q,\theta}(X_n)
$.
Let
\[
\mathcal D
=
\left\{
\tau>0:
\mathbb E_{r_n}\exp\{\tau\Lambda(X_n)\}<\infty
\right\}.
\]
For every \(\tau\in\mathcal D\),
\[
\mathbb E_{r_n}\Lambda(X_n)
\le
\mathcal I_{n,\tau}(\mathcal P,q,\theta)
\le
\operatorname*{ess\,sup}_{r_n}\Lambda(X_n),
\]
where the lower bound is interpreted whenever \(\mathbb E_{r_n}\Lambda(X_n)\) is well defined. Moreover, if
\(0<s<t\) and \(s,t\in\mathcal D\), then
\[
\mathcal I_{n,s}(\mathcal P,q,\theta)
\le
\mathcal I_{n,t}(\mathcal P,q,\theta).
\]
Thus \(\mathcal I_{n,\tau}(\mathcal P,q,\theta)\) is nondecreasing on its moment domain.
\end{proposition}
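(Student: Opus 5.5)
The plan is to use Jensen's inequality twice and then a monotonicity-of-power-means argument. Throughout, fix the codebook and write $\Lambda=\Lambda(X_n)$. Since $q_j\in(0,1]$ and $p_n(x\mid\theta)\le 1$ on a countable space, $\Lambda\ge 0$ pointwise, so $\mathbb E_{r_n}\Lambda$ is always well defined in $[0,\infty]$. This also means that every exponential moment appearing below is that of a nonnegative variable.

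\emph{Lower bound.} For $\tau\in\mathcal D$, apply Jensen's inequality to the convex function $u\mapsto e^{\tau u}$:
\[
\exp\{\tau\,\mathbb E_{r_n}\Lambda\}\le \mathbb E_{r_n} e^{\tau\Lambda}.
\]
Taking $\frac1\tau\log$ gives $\mathbb E_{r_n}\Lambda\le\mathcal I_{n,\tau}$. Jensen applies here because $\Lambda\ge0$ and $\mathbb E e^{\tau\Lambda}<\infty$ force $\mathbb E\Lambda<\infty$. Alternatively, one can evaluate the variational formula of Theorem~\ref{thm:variational} at $s=r_n$: the KL term vanishes and the same bound follows.

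\emph{Upper bound.} Let $M=\operatorname*{ess\,sup}_{r_n}\Lambda$. If $M=\infty$ there is nothing to prove. Otherwise $e^{\tau\Lambda}\le e^{\tau M}$ $r_n$-a.s., hence $\mathbb E_{r_n}e^{\tau\Lambda}\le e^{\tau M}$ and $\mathcal I_{n,\tau}\le M$. Since $r_n(x)>0$ for every $x$, the essential supremum is simply $\sup_x\Lambda(x)$, though the argument does not need this.

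\emph{Monotonicity.} For $0<s<t$ in $\mathcal D$, set $Y=e^{s\Lambda}\ge 1$ and $\phi(y)=y^{t/s}$, which is convex on $[0,\infty)$ because $t/s>1$. Jensen's inequality gives
\[
\big(\mathbb E_{r_n}Y\big)^{t/s}\le \mathbb E_{r_n}Y^{t/s}=\mathbb E_{r_n}e^{t\Lambda}.
\]
Taking logarithms and dividing by $t$ yields
\[
\tfrac1s\log\mathbb E e^{s\Lambda}\le\tfrac1t\log\mathbb E e^{t\Lambda},
\]
i.e.\ $\mathcal I_{n,s}\le\mathcal I_{n,t}$. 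This is the usual monotonicity of the power (Lyapunov) means $\|e^{\Lambda}\|_{L^\tau(r_n)}$ in $\tau$, since $\mathcal I_{n,\tau}=\log\|e^\Lambda\|_{L^\tau(r_n)}$. A side remark: $\mathcal D$ is an interval of the form $(0,\tau_0)$ or $(0,\tau_0]$, because $e^{s\Lambda}\le 1+e^{t\Lambda}$ for $s<t$. This is not needed for the claim as stated, which already assumes $s,t\in\mathcal D$.

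The only step requiring care is checking integrability so that Jensen's inequality is legitimate in each application. This is routine here because $\Lambda\ge0$ and both exponential moments are finite by assumption. So I expect no genuine obstacle beyond bookkeeping of the infinite-value cases ($M=\infty$, and the interpretation clause for $\mathbb E\Lambda$).
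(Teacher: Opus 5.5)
Your proposal is correct and follows the paper's proof essentially step by step: Jensen's inequality gives the lower bound (you apply it to the convex map $u\mapsto e^{\tau u}$, the paper to the concave $\log$, which amounts to the same inequality), the bound $e^{\tau\Lambda}\le e^{\tau M}$ gives the upper bound, and monotonicity comes from the monotonicity of $\|e^{\Lambda}\|_{L^\tau(r_n)}$ in $\tau$, which you prove directly via Jensen with $y\mapsto y^{t/s}$ where the paper simply cites it. Your observation that $\Lambda\ge 0$ (because $q_j\le 1$ and $p_n(x\mid\theta)\le 1$ on a countable space) is a useful addition: it settles the well-definedness and integrability questions that the paper leaves implicit.
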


\begin{proof}
For brevity, write
$ \mathcal I_\tau = \mathcal I_{n,\tau}(\mathcal P,q,\theta) $. 
%
The lower bound follows from Jensen's inequality:
\[
\log \mathbb E_{r_n}\exp\{\tau\Lambda(X_n)\}
\ge
\mathbb E_{r_n}\log \exp\{\tau\Lambda(X_n)\}
=
\tau \mathbb E_{r_n}\Lambda(X_n).
\]
Dividing by $\tau$ gives $\mathcal I_\tau \ge \mathbb E_{r_n}\Lambda(X_n)$.
For the upper bound, let
$M=\operatorname*{ess\,sup}_{r_n}\Lambda(X_n)$.
If \(M=\infty\), the bound is trivial. Otherwise,
$\Lambda(X_n)\le M \; r_n\text{-a.s.}$ and hence
\[
\mathbb E_{r_n}\exp\{\tau\Lambda(X_n)\}
\le
\exp\{\tau M\}.
\]
Therefore $\mathcal I_\tau \le M$. It remains to prove monotonicity. Let \(0<s<t\), with \(s,t\in\mathcal D\), and set $Y=\exp\{\Lambda(X_n)\}$.  Then \(Y\ge0\), and
\[
\mathcal I_s
=
\log
\left(
\mathbb E_{r_n}Y^s
\right)^{1/s},
\qquad
\mathcal I_t
=
\log
\left(
\mathbb E_{r_n}Y^t
\right)^{1/t}.
\]
Since \(r_n\) is a probability measure, \(L^p\)-norms are nondecreasing in \(p\). Hence
\[
\left(
\mathbb E_{r_n}Y^s
\right)^{1/s}
\le
\left(
\mathbb E_{r_n}Y^t
\right)^{1/t}.
\]
Taking logarithms gives $\mathcal I_s\le \mathcal I_t$,  proving monotonicity.
\end{proof}
\subsection{Recovery of ordinary SMML}
For optimizer-convergence statements in this section, we use the following standard convention. Either the admissible class \(\mathcal C_n\) is finite, or it is equipped with a topology under which the relevant objectives are continuous, their sublevel sets are compact, and the displayed pointwise convergences are uniform over \(\mathcal C_n\). Under these conditions, convergence of minimizers follows from the usual argmin-continuity theorem. For purely pointwise statements about a fixed codebook, no compactness assumption on \(\mathcal C_n\) is required.
\begin{theorem}[Recovery of ordinary SMML]
\label{thm:tau-zero}
For any admissible codebook $(\mathcal P,q,\theta)\in\mathcal C_n$, such that \(\mathbb{E}_{r_n} \! \left[\exp\{u|\Lambda_{\mathcal P,q,\theta}(X_n)|\}\right] <\infty \)  for all \(u\) in a neighborhood of zero,
\begin{align}
\lim_{\tau \downarrow 0}
\mathcal I_{n,\tau}(\mathcal P,q,\theta)
=
\mathcal I_n(\mathcal P,q,\theta)
=
\mathbb E_{r_n}\!\left[
\Lambda_{\mathcal P,q,\theta}(X_{n})
\right].
\label{eqn:tau-zero-limit}
\end{align}
Moreover, under the uniform convergence and compactness convention stated above, if \((\mathcal P^*_{n,\tau},q^*_{n,\tau},\theta^*_{n,\tau})\) is any entropic SMML optimizer and the ordinary SMML optimizer is unique, then
\[
(\mathcal P_{n,\tau}^*,q_{n,\tau}^*,\theta_{n,\tau}^*)
\to
(\mathcal P_n^*,q_n^{*},\theta_n^{*})
\qquad \text{as }\tau\downarrow 0.
\]
\end{theorem}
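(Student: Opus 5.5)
The pointwise limit \eqref{eqn:tau-zero-limit} is a statement about the cumulant generating function of $\Lambda=\Lambda_{\mathcal P,q,\theta}(X_n)$ under $r_n$. Define $K(u)=\log\mathbb E_{r_n}[e^{u\Lambda}]$, so that $\mathcal I_{n,\tau}=K(\tau)/\tau$ and $K(0)=0$.

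The exponential-moment hypothesis gives some $u_0>0$ with $\mathbb E_{r_n}e^{u|\Lambda|}<\infty$ for $|u|<u_0$. From this:
\begin{itemize}
\item $M(u)=\mathbb E_{r_n}e^{u\Lambda}$ is finite on $(-u_0,u_0)$.
\item $\Lambda$ is integrable, because $|\Lambda|\le e^{u|\Lambda|}/u$.
\item $M$ is differentiable at $0$ with $M'(0)=\mathbb E_{r_n}\Lambda$.
\end{itemize}
For the derivative, I would use dominated convergence on the difference quotients $(e^{u\Lambda}-1)/u$. By the mean value theorem these are bounded by $|\Lambda|e^{|u||\Lambda|}$. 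For $|u|\le u_0/2$ this is in turn dominated by $(2/u_0)e^{u_0|\Lambda|}$, which is integrable after slightly shrinking $u_0$. Then $K=\log M$ is differentiable at $0$ with $K'(0)=M'(0)/M(0)=\mathbb E_{r_n}\Lambda$. Hence $\lim_{\tau\downarrow0}K(\tau)/\tau=K'(0)=\mathcal I_n(\mathcal P,q,\theta)$.

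As a cross-check, the preceding proposition shows that $\mathcal I_{n,\tau}$ is nondecreasing in $\tau$ and bounded below by $\mathbb E_{r_n}\Lambda$. So the limit exists, and the argument above only has to identify its value. Alternatively, Theorem~\ref{thm:variational} gives the same limit: the tilted $s^*_{n,\tau}$ converges to $r_n$ as $\tau\downarrow0$.

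For the optimizer statement, I would invoke the stated convention. Under it, $\mathcal I_{n,\tau}\to\mathcal I_n$ uniformly on $\mathcal C_n$, the objectives are continuous, and their sublevel sets are compact. Take any sequence $\tau_m\downarrow0$ with entropic minimizers $c_m=(\mathcal P^*_{n,\tau_m},q^*_{n,\tau_m},\theta^*_{n,\tau_m})$. The standard argmin-continuity argument then runs as follows.
\begin{itemize}
\item Let $c^*$ be the unique ordinary SMML optimizer. For large $m$,
\[
\mathcal I_n(c_m)\le \mathcal I_{n,\tau_m}(c_m)+\epsilon\le \mathcal I_{n,\tau_m}(c^*)+\epsilon\le \mathcal I_n(c^*)+2\epsilon .
\]
So the $c_m$ eventually lie in a compact sublevel set of $\mathcal I_n$.
\item Every subsequence therefore has a further convergent subsequence. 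By continuity of $\mathcal I_n$, its limit $\bar c$ satisfies $\mathcal I_n(\bar c)\le\inf\mathcal I_n$.
\item Uniqueness forces $\bar c=c^*$, so the whole sequence converges to $c^*$.
\end{itemize}
In the finite case, $\mathcal C_n$ is finite (it can be infinite only through $q$ and $\theta$, which is why the convention is needed). Convergence is then eventual equality: pointwise convergence at finitely many points is automatically uniform, and a strict gap separates $c^*$ from the other codebooks.

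The main obstacle is the differentiation-under-the-integral step for countable $\mathcal X_n$, where $\Lambda$ may be unbounded. This is exactly where the two-sided exponential-moment hypothesis is used. I would handle it with the explicit dominating function above rather than appealing to analyticity of the moment generating function. The optimizer part is routine given the convention, so I would state it briefly.
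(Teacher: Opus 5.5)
Your proposal is correct and follows essentially the paper's route. The paper expands the cumulant generating function as $\mathbb E_{r_n}\Lambda+\frac{\tau}{2}\mathrm{Var}_{r_n}(\Lambda)+O(\tau^2)$ without justification and cites argmin-continuity, whereas you rigorously establish only the first-order fact $K'(0)=\mathbb E_{r_n}\Lambda$ (all the limit needs) and spell out the argmin argument. One harmless slip: your parenthetical that $\mathcal C_n$ can be infinite only through $q$ and $\theta$ fails when $\mathcal X_n$ is countably infinite, since there are then uncountably many finite partitions, but nothing in your argument depends on it.
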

\begin{proof}
Fix $(\mathcal P,q,\theta)\in\mathcal C_n$ and write
\[
\Lambda(X_{n}) := \Lambda_{\mathcal P,q,\theta}(X_{n}).
\]
By definition,
\[
\mathcal I_{n,\tau}(\mathcal P,q,\theta)
=
\frac{1}{\tau}\log
\mathbb E_{r_n}\!\left[e^{\tau \Lambda(X_{n})}\right].
\]
Expanding the logarithmic moment generating function around $\tau=0$ gives
\begin{align}
\mathbb E_{r_n}[\Lambda(X_{n})]
+
\frac{\tau}{2}\mathrm{Var}_{r_n}(\Lambda(X_{n}))
+
O(\tau^2).
\label{eqn:cumulant-expansion}
\end{align}
The leading term is the ordinary SMML objective $\mathcal I_n(\mathcal P,q,\theta)$ from \eqref{eqn:ordinary-smml-objective}, which proves \eqref{eqn:tau-zero-limit}. Under the uniform convergence/compactness convention stated above, the convergence of minimizers under uniqueness follows from the standard argmin-continuity theorem.
\end{proof}
Theorem~\ref{thm:tau-zero} shows that entropic SMML is a genuine risk-sensitive deformation of ordinary SMML rather than a different coding criterion. The first-order correction term in \eqref{eqn:cumulant-expansion} is proportional to the codelength variance under the prior predictive law, so entropic SMML may be viewed locally as a variance-penalized perturbation of ordinary SMML. In particular, the small-$\tau$ regime preserves the ordinary Bayesian coding interpretation of SMML, in which the expected two-part codelength is minimized under the prior predictive distribution.
\subsection{Worst-case codelength and regret-centered NML}
We now turn to the opposite endpoint $\tau\to\infty$. 
Let
\begin{align}
\label{eqn:Mn-def}
M_n(\mathcal P,q,\theta)
:=
\sup_{x\in\mathcal X_n}
\Lambda_{\mathcal P,q,\theta}(x)
\end{align}
denote the worst-case codelength associated with a codebook.
\begin{theorem}[Worst-case codelength limit]
\label{thm:tau-infty}
Fix \(n\) and an admissible codebook
\((\mathcal P,q,\theta)\in\mathcal C_n\). 
If \(M_n(\mathcal P,q,\theta)<\infty\), then
\[
\lim_{\tau\to\infty}
\mathcal I_{n,\tau}(\mathcal P,q,\theta)
=
M_n(\mathcal P,q,\theta).
\]
Moreover, if the above convergence is uniform over \(\mathcal C_n\), then
\[
\lim_{\tau\to\infty}
\inf_{(\mathcal P,q,\theta)\in\mathcal C_n}
\mathcal I_{n,\tau}(\mathcal P,q,\theta)
=
\inf_{(\mathcal P,q,\theta)\in\mathcal C_n}
M_n(\mathcal P,q,\theta).
\]
If the infima are attained, the same conclusion holds with
\(\inf\) replaced by \(\min\).
\end{theorem}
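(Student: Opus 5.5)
The plan is to squeeze $\mathcal I_{n,\tau}$ between an upper bound already established and a matching lower bound extracted from a single near-worst-case data point; the optimal-value statement then follows from the fixed-codebook limit and the assumed uniform convergence.

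\emph{Upper bound.} Fix $(\mathcal P,q,\theta)$, write $\Lambda=\Lambda_{\mathcal P,q,\theta}$ and $M=M_n(\mathcal P,q,\theta)<\infty$. Because $r_n(x)>0$ for every $x$, the essential supremum of $\Lambda$ under $r_n$ coincides with the pointwise supremum $M$. Since $\Lambda\le M$ everywhere, $\mathbb E_{r_n}e^{\tau\Lambda}\le e^{\tau M}<\infty$ for every $\tau>0$. Hence every $\tau$ lies in the moment domain $\mathcal D$, and the monotonicity and endpoint-bounds proposition gives two facts. First, $\mathcal I_{n,\tau}\le M$ for all $\tau$. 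Second, $\tau\mapsto\mathcal I_{n,\tau}$ is nondecreasing, so the limit as $\tau\to\infty$ exists and is at most $M$.

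\emph{Lower bound.} Fix $\varepsilon>0$ and choose $x_\varepsilon\in\mathcal X_n$ with $\Lambda(x_\varepsilon)\ge M-\varepsilon$. Keeping only that single term of the sum,
\[
\mathcal I_{n,\tau}\ge \frac1\tau\log\big(r_n(x_\varepsilon)e^{\tau\Lambda(x_\varepsilon)}\big)=\Lambda(x_\varepsilon)+\frac{\log r_n(x_\varepsilon)}{\tau}\ge M-\varepsilon+\frac{\log r_n(x_\varepsilon)}{\tau}.
\]
Because $r_n(x_\varepsilon)>0$ and $x_\varepsilon$ does not depend on $\tau$, the last term vanishes as $\tau\to\infty$. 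Therefore $\liminf_{\tau\to\infty}\mathcal I_{n,\tau}\ge M-\varepsilon$. Letting $\varepsilon\downarrow0$ and combining with the upper bound proves the pointwise limit.

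\emph{Optimal values.} Set $\delta_\tau=\sup_{\mathcal C_n}|\mathcal I_{n,\tau}-M_n|$; by the uniformity hypothesis, $\delta_\tau\to0$. For every codebook, $M_n-\delta_\tau\le \mathcal I_{n,\tau}\le M_n+\delta_\tau$. Taking infima over $\mathcal C_n$ preserves these inequalities, so $|\inf\mathcal I_{n,\tau}-\inf M_n|\le\delta_\tau\to0$. This argument works even when $\inf M_n$ is approached only by codebooks with finite $M_n$; codebooks with $M_n=\infty$ are implicitly excluded by the uniformity assumption. When the infima are attained, the same inequalities hold with $\min$ in place of $\inf$.

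The main obstacle is the lower bound, specifically ensuring that the near-maximizer $x_\varepsilon$ carries positive mass that does not shrink with $\tau$. This is exactly where the standing assumption $r_n>0$ on the countable space $\mathcal X_n$ is needed; without it the limit would be the essential supremum rather than $M_n$. The interchange of limit and infimum is not an obstacle here, since it is assumed outright through uniform convergence.
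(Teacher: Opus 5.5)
Your proposal is correct and follows the paper's proof essentially step for step. It uses the upper bound $\mathcal I_{n,\tau}\le M_n$, the single-term lower bound $M_n-\varepsilon+\tau^{-1}\log r_n(x_\varepsilon)$ (relying on $r_n>0$), and the interchange of limit and infimum via uniform convergence. The only differences are cosmetic: you take the upper bound from the monotonicity proposition where the paper factors out $e^{\tau M_n}$ directly, and your $\delta_\tau$ argument spells out the optimal-value step that the paper states in one line.
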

\begin{proof}
Fix \((\mathcal P,q,\theta)\) and write
\[
\Lambda(x)=\Lambda_{\mathcal P,q,\theta}(x),
\qquad
M_n=M_n(\mathcal P,q,\theta).
\]
Then
\[
\mathcal I_{n,\tau}
=
M_n
+
\frac{1}{\tau}
\log
\sum_{x\in\mathcal X_n}
r_n(x)e^{\tau(\Lambda(x)-M_n)}.
\]
Since \(\Lambda(x)-M_n\le 0\), the logarithmic term is nonpositive. Hence
\[
\mathcal I_{n,\tau}\le M_n.
\]
Conversely, for any \(\varepsilon>0\), choose
\(x_\varepsilon\in \mathcal X_n\) such that
\[
\Lambda(x_\varepsilon)\ge M_n-\varepsilon .
\]
Since \(r_n(x_\varepsilon)>0\),
\[
\mathcal I_{n,\tau}
\ge
M_n-\varepsilon
+
\frac{1}{\tau}\log r_n(x_\varepsilon).
\]
Taking \(\liminf_{\tau\to\infty}\) and then letting
\(\varepsilon\downarrow0\) gives
\[
\liminf_{\tau\to\infty}\mathcal I_{n,\tau}\ge M_n.
\]
Together with the upper bound, this proves the pointwise limit.

Uniform convergence over \(\mathcal C_n\) allows the limit and infimum to be interchanged, yielding the displayed identity. The statement with minima follows when the infima are attained.
\end{proof}
Theorem~\ref{thm:tau-infty} shows that entropic SMML converges pointwise to a minimax codelength objective. Specifically, the criterion chooses an SMML-style two-part codebook whose longest possible message is as short as possible. The remaining question is how this relates to the minimax \emph{regret} criterion that defines normalized maximum likelihood.

Let $\hat\theta(x)$ denote a maximum likelihood estimator and define the oracle ML codelength
\begin{align}
\Lambda_{\mathrm{ML}}(x)
=
-\log p_n(x\mid \hat\theta(x)).
\label{eq:ml-code}
\end{align}
For any codebook $(\mathcal P,q,\theta)$, define the regret relative to the ML code by
\begin{align}
R_{\mathcal P,q,\theta}(x)
&:=
\Lambda_{\mathcal P,q,\theta}(x)
-
\Lambda_{\mathrm{ML}}(x) \notag \\
&=
\log
\frac{
p_n(x\mid \hat\theta(x))
}{
q_{j(x)}p_n(x\mid \theta_{j(x)})
}.
\label{eq:regret-def}
\end{align}
With this notation,
\begin{align}
\sup_{x\in\mathcal X_n}
\Lambda_{\mathcal P,q,\theta}(x)
=
\sup_{x\in\mathcal X_n}
\bigl(
\Lambda_{\mathrm{ML}}(x)
+
R_{\mathcal P,q,\theta}(x)
\bigr).
\label{eq:wc-regret}
\end{align}
Equation~\eqref{eq:wc-regret} highlights the distinction between worst-case absolute codelength and worst-case regret. The following proposition makes this distinction precise in the large‑$\tau$ limit.
\begin{proposition}[Regret-centered endpoint and NML]
\label{prop:regret-nml}
Let \(\mu_n\) be a full-support probability distribution on \(\mathcal X_n\). For a coding distribution \(Q\) on \(\mathcal X_n\), define the regret
\[
R_Q(x)
=
-\log Q(x)
+
\log p_n(x\mid\hat\theta(x)),
\]
where \(\hat\theta(x)\) is a maximum likelihood estimate, and define
\[
\mathcal J_{n,\tau}(Q)
=
\frac{1}{\tau}
\log
\sum_{x\in\mathcal X_n}
\mu_n(x)\exp\{\tau R_Q(x)\}.
\]
If \(\sup_x R_Q(x)<\infty\), then
\[
\lim_{\tau\to\infty}
\mathcal J_{n,\tau}(Q)
=
\sup_{x\in\mathcal X_n}R_Q(x).
\]
Assume further that the Shtarkov sum
\[
S_n
=
\sum_{x\in\mathcal X_n}
p_n(x\mid\hat\theta(x))
\]
is finite. Then, when the admissible coding class is the set of all probability distributions on \(\mathcal X_n\), a minimax-regret solution is the normalized maximum likelihood distribution
\[
p_{\mathrm{NML}}(x)
=
\frac{p_n(x\mid\hat\theta(x))}{S_n},
\]
and the minimax regret is \(\log S_n\).
\end{proposition}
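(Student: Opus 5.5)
The proof has two independent parts, and I will treat them separately.

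\textbf{The limit statement.} I will reuse the argument of Theorem~\ref{thm:tau-infty}, with $\Lambda$ replaced by $R_Q$ and $r_n$ replaced by $\mu_n$. Write $M=\sup_x R_Q(x)<\infty$. Factoring out $e^{\tau M}$ gives
\[
\mathcal J_{n,\tau}(Q)=M+\frac{1}{\tau}\log\sum_x\mu_n(x)e^{\tau(R_Q(x)-M)}.
\]
The sum is at most $\sum_x\mu_n(x)=1$, so $\mathcal J_{n,\tau}(Q)\le M$. For the matching lower bound, fix $\varepsilon>0$ and pick $x_\varepsilon$ with $R_Q(x_\varepsilon)\ge M-\varepsilon$. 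Keeping only that term and using full support, $\mu_n(x_\varepsilon)>0$, we get
\[
\mathcal J_{n,\tau}(Q)\ge M-\varepsilon+\tau^{-1}\log\mu_n(x_\varepsilon).
\]
Letting $\tau\to\infty$ and then $\varepsilon\downarrow0$ gives the limit. Full support is exactly what makes the lower bound work, because otherwise $\log\mu_n(x_\varepsilon)=-\infty$.

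\textbf{The NML statement (Shtarkov's argument).} For $Q=p_{\mathrm{NML}}$ we have $R_Q(x)=\log S_n$ for every $x$, so $\sup_x R_{p_{\mathrm{NML}}}(x)=\log S_n$. Now take any distribution $Q$. Suppose $\sup_x R_Q(x)<\log S_n$. Then $Q(x)>p_n(x\mid\hat\theta(x))/S_n$ for all $x$, with positive strict gap. Summing over $x$, using that $S_n<\infty$ so the series converges, gives $1=\sum_x Q(x)>1$, a contradiction.

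\textbf{The main subtle step} is this strictness in the countable case. I will avoid it by working with a non-strict form. If $Q(x)\ge p_n(x\mid\hat\theta(x))/S_n$ holds for all $x$, then both $Q$ and $p_{\mathrm{NML}}$ sum to $1$, which forces $Q=p_{\mathrm{NML}}$. So any $Q\ne p_{\mathrm{NML}}$ has some $x$ with $Q(x)<p_{\mathrm{NML}}(x)$, hence $R_Q(x)>\log S_n$. Therefore $\sup_x R_Q(x)\ge\log S_n$ for every $Q$, with equality attained by $p_{\mathrm{NML}}$. This shows that $\inf_Q\sup_x R_Q=\log S_n$, achieved by NML, and NML is in fact the unique minimax-regret distribution.

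Combining the two parts identifies the $\tau\to\infty$ limit of $\mathcal J_{n,\tau}$, optimized over all $Q$, with the minimax-regret problem. That is the interpretation the proposition is after. The exchange of $\lim$ and $\inf$ is not part of the claim as stated, so I will not address it.
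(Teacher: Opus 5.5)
Your proposal is correct and follows the paper's argument. It uses the same soft-max sandwich, with full support of $\mu_n$ securing the lower bound, and the same Shtarkov normalization argument: the paper sums $Q(x)\ge e^{-R}p_n(x\mid\hat\theta(x))$ to get $R\ge\log S_n$, while you compare $Q$ pointwise with $p_{\mathrm{NML}}$, which has the small bonus of giving uniqueness of NML. Your worry about strictness in the countable case is unfounded, since a convergent series of strictly positive terms has positive sum and the gap $e^{-R'}>1/S_n$ is uniform anyway, but your non-strict route is also valid.
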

\begin{proof}
The first claim is the same soft-max argument as in
Theorem~\ref{thm:tau-infty}, applied to \(R_Q\) with reference measure
\(\mu_n\). Full support of \(\mu_n\) ensures that every near-maximizer of \(R_Q\) has positive \(\mu_n\)-mass.

For the minimax-regret claim, suppose that a coding distribution \(Q\)
satisfies
\[
\sup_x
\left\{
-\log Q(x)+\log p_n(x\mid\hat\theta(x))
\right\}
\le R .
\]
Then, for every \(x\),
\[
Q(x)
\ge
e^{-R}p_n(x\mid\hat\theta(x)).
\]
Summing over \(x\) gives
\[
1
=
\sum_x Q(x)
\ge
e^{-R}
\sum_x p_n(x\mid\hat\theta(x))
=
e^{-R}S_n.
\]
Thus \(R\ge\log S_n\). Equality is achieved by
\[
Q(x)=p_n(x\mid\hat\theta(x))/S_n.
\]
Therefore \(p_{\mathrm{NML}}\) is minimax optimal and the minimax regret is
\(\log S_n\).
\end{proof}
Proposition~\ref{prop:regret-nml} shows that applying the same entropic soft maximum to regret, rather than to absolute codelength, recovers the NML minimax-regret principle when the Shtarkov sum is finite.
When the Shtarkov sum is infinite, ordinary NML is not defined. In such cases, refined MDL often replaces NML by restricted, conditional, or luckiness-weighted variants; the latter introduce a nonnegative weight on the parameter space and lead to luckiness-normalized maximum likelihood (LNML) codes~\cite{Grunwald07,Miyaguchi2017,GrunwaldRoos19}. This suggests a natural extension of the regret-centered entropic construction in which the ML oracle term is augmented by a luckiness penalty.
\section{Joint $n$--$\tau$ Asymptotics}
\label{sec:n:tau:asymptotics}
Sections~\ref{sec:var:pac:bayes} and~\ref{sec:smml:nml} show that, for fixed sample size $n$, entropic SMML interpolates between ordinary SMML and a worst-case codelength criterion as $\tau$ ranges from $0$ to $\infty$, with NML recovered by the corresponding regret-centered construction. We now study the corresponding asymptotics when the sample size $n$ also tends to infinity. 
For each $n$, the entropic criterion admits an exact KL-regularized variational representation, so the main asymptotic question is how its behavior depends jointly on the growth of $\tau_n$ and on the scales governing typical codelength fluctuations and worst-case soft-max concentration under the prior predictive law. 
Throughout this section, let $\tau=\tau_n>0$ depend on $n$, and let
\[
\mathcal I_{n,\tau_n}(\mathcal P,q,\theta)
=
\frac{1}{\tau_n}
\log
\mathbb E_{r_n}
\!\left[
\exp\!\big(
\tau_n \Lambda_{\mathcal P,q,\theta}(X_{n})
\big)
\right]
\]
denote the entropic SMML objective. The theorem below is stated in terms of two auxiliary sequences, $v_n$ and $a_n$, which quantify these effects. Roughly speaking, $v_n$ controls the scale of stochastic fluctuations of $\Lambda_{\mathcal P,q,\theta}(X_n)$ about its mean under $r_n$, while $a_n$ controls the rate at which the entropic soft maximum concentrates on near worst-case observations. Precise definitions and assumptions are given in the appendix. 

%
\begin{theorem}[Joint $n$--$\tau$ asymptotics]
\label{thm:joint-ntau}
Let $\tau_n>0$ be a sequence, and suppose Assumptions~1 and~2 in the appendix hold. Then the following conclusions hold uniformly over admissible codebooks $(\mathcal P,q,\theta)\in\mathcal C_n$.

\begin{enumerate}
\item[(1)] \textbf{SMML regime.}
If $\tau_n\to 0$ and $\tau_n v_n \to 0$, then
\[
\mathcal  I_{n,\tau_n}(\mathcal P,q,\theta)
=
\mathbb E_{r_n}\!\left[\Lambda_{\mathcal P,q,\theta}(X_n)\right]
+o(1).
\]
In particular, any sequence of entropic SMML minimizers is asymptotically ordinary SMML optimal. 


\item[(2)] \textbf{Worst-case codelength regime.}
Under Assumption~2, if \(a_n/\tau_n\to0\), then
\[
\mathcal  I_{n,\tau_n}(\mathcal P,q,\theta)
=
\sup_{x\in\mathcal X_n} \Lambda_{\mathcal P,q,\theta}(x)
+o(1).
\]
%
\end{enumerate}
\end{theorem}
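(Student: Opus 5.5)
The plan is to treat the two regimes separately, both starting from the centred representation of the entropic objective. Fix a codebook, write \(\Lambda=\Lambda_{\mathcal P,q,\theta}(X_n)\), \(\mu_n=\mathbb E_{r_n}\Lambda\) and \(M_n=\sup_x\Lambda(x)\). We then have
\[
\mathcal I_{n,\tau_n}-\mu_n=\frac{1}{\tau_n}\log\mathbb E_{r_n}e^{\tau_n(\Lambda-\mu_n)},
\qquad
\mathcal I_{n,\tau_n}-M_n=\frac{1}{\tau_n}\log\mathbb E_{r_n}e^{\tau_n(\Lambda-M_n)}.
\]
By the monotonicity and endpoint-bound proposition, \(0\le \mathcal I_{n,\tau_n}-\mu_n\) and \(\mathcal I_{n,\tau_n}-M_n\le 0\). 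So each regime only needs a one-sided bound, and that bound must be uniform over \(\mathcal C_n\). I will read Assumption~1 as a uniform sub-exponential (Bernstein-type) control on the centred codelength, of the form
\[
\log\mathbb E_{r_n}e^{u(\Lambda-\mu_n)}\le \frac{u^2 v_n}{2(1-u b_n)}
\qquad\text{for } 0<u<1/b_n,
\]
uniformly in the codebook. I will read Assumption~2 as a uniform lower bound on the prior-predictive mass of near-maximisers, namely \(r_n\{x:\Lambda(x)\ge M_n-\varepsilon\}\ge e^{-a_n c(\varepsilon)}\) for every \(\varepsilon>0\), with \(c(\varepsilon)\) independent of the codebook.

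For the \textbf{SMML regime}, insert the cumulant bound with \(u=\tau_n\). This gives
\[
0\le \mathcal I_{n,\tau_n}-\mu_n\le \frac{\tau_n v_n}{2(1-\tau_n b_n)},
\]
and the right-hand side tends to zero uniformly because \(\tau_n v_n\to0\) and \(\tau_n\to0\); the latter ensures \(\tau_n b_n\to 0\) when \(b_n\) is bounded. This is the quantitative version of the expansion in Theorem~\ref{thm:tau-zero}, with the remainder now controlled rather than merely \(O(\tau^2)\) for fixed \(n\).

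The minimiser statement then follows by a standard sandwich argument. Let \(\epsilon_n\) be the uniform error. For any entropic minimiser \(c_n\) and ordinary SMML minimiser \(c_n^*\),
\[
\mathcal I_n(c_n)\le \mathcal I_{n,\tau_n}(c_n)\le \mathcal I_{n,\tau_n}(c_n^*)\le \mathcal I_n(c_n^*)+\epsilon_n,
\]
so \(c_n\) is \(\epsilon_n\)-optimal for ordinary SMML.

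For the \textbf{worst-case regime}, copy the lower-bound half of Theorem~\ref{thm:tau-infty}, but replace the single point \(x_\varepsilon\) by the whole near-maximiser set:
\[
\mathcal I_{n,\tau_n}\ge M_n-\varepsilon+\frac{1}{\tau_n}\log r_n\{\Lambda\ge M_n-\varepsilon\}\ge M_n-\varepsilon-\frac{a_n c(\varepsilon)}{\tau_n}.
\]
Since \(a_n/\tau_n\to0\), this gives \(\liminf_n\inf_{\mathcal C_n}(\mathcal I_{n,\tau_n}-M_n)\ge-\varepsilon\). Letting \(\varepsilon\downarrow0\) and combining with the upper bound \(\mathcal I_{n,\tau_n}\le M_n\) yields the claim uniformly over codebooks. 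The \(\varepsilon\)-then-\(n\) limit must be taken with \(c(\varepsilon)\) codebook-free, so that the \(o(1)\) is uniform. If the assumption is instead phrased with \(\varepsilon=\varepsilon_n\to0\), the bound holds directly.

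I expect the main obstacle to be uniformity over \(\mathcal C_n\). Codebooks can have tiny \(q_j\), which makes \(\Lambda\) large, heavy-tailed, or gives it vanishing near-maximiser mass. The proof will work only if Assumptions~1 and~2 are genuinely stated uniformly, for example after restricting to codebooks with \(\min_j q_j\) bounded below polynomially in \(n\). I would first check whether the appendix assumptions supply exactly the two displayed inequalities above, and if not, derive them from whatever moment and mass conditions are given there.
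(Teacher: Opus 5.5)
Your proposal is correct and follows the paper's proof essentially step for step. It uses the centred representation with the one-sided Jensen and supremum bounds, the cumulant bound at $u=\tau_n$ giving $0\le \mathcal I_{n,\tau_n}-\mathbb E_{r_n}\Lambda\le C_0\tau_n v_n$, the same sandwich argument for minimisers, and the lower bound $\mathcal I_{n,\tau_n}\ge M_n-\varepsilon_n-a_n/\tau_n$. For the record, the paper's Assumption~1 is the bound $C_0u^2v_n$ on a fixed interval $0\le u\le u_0$, so your Bernstein caveat about $b_n$ is unnecessary. Its Assumption~2 directly supplies a single near-maximiser $x_n^\star$ with slack $\varepsilon_n\downarrow0$ and $-\log r_n(x_n^\star)\le a_n$, which is exactly your ``$\varepsilon=\varepsilon_n$'' case, and both assumptions are stated uniformly over $\mathcal C_n$.
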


The proof is deferred to the appendix. Theorem~\ref{thm:joint-ntau} provides a scale-explicit characterization of the joint $n$--$\tau$ asymptotics. 
When \(\tau_n\to0\) and \(\tau_n v_n\to0\), entropic SMML is uniformly asymptotically equivalent to ordinary SMML over the admissible codebook class. When \(a_n/\tau_n\to0\), it is uniformly asymptotically equivalent to the
worst-case absolute codelength criterion on the prior-predictive support.
At the same time, for every $n$, the criterion admits an exact KL-regularized variational representation (see Theorem~\ref{thm:variational}), so the intermediate regime is naturally interpreted as a robust coding regime rather than as a separate asymptotic approximation. In settings where both the fluctuation scale and the soft-max penalty scale are logarithmic, that is, $v_n=O(\log n)$ and $a_n=O(\log n)$, the preceding bounds recover a logarithmic separation between the Bayesian average-case regime and the worst-case codelength regime.

The next section specializes the theory to regular exponential families, where the fixed-partition codepoint equation and the induced cell geometry simplify substantially.
\section{Regular Exponential Families}
\label{sec:exponential:families}
This section specializes entropic SMML to regular exponential families. In the ordinary SMML setting, codepoints in exponential families satisfy a moment-matching condition, and the exact fixed-codebook partition is the pullback of a polyhedral partition of sufficient-statistic space. These properties were established for ordinary SMML in~\cite{Dowty13,MakalicSchmidt26b}. We show here that the same affine cell structure survives under the entropic deformation, while the codepoint equation is replaced by a tilted moment-matching condition.

In the remainder of this section, unless stated otherwise, we work with a full regular exponential family in natural coordinates, with open convex natural parameter space \(\mathcal N\), writing
\begin{align}
p_n(x\mid\theta)
=
h(x)\exp\!\big(
\eta(\theta)^\top T(x)-n A(\eta(\theta))
\big),
\label{eqn:expfam}
\end{align}
where $T(x)\in\mathbb R^d$ is a sufficient statistic,
$\eta(\theta)\in\mathbb R^d$ is the natural parameter, and $A$ is the log-partition function. We write $\nu=\eta(\theta)$ when it is convenient to work directly in natural-parameter coordinates. For a fixed partition cell $P_j$, recall from \eqref{eqn:entropic-codepoint} that the entropic SMML codepoint solves
\begin{align}
\theta_{j,\tau}^*
\in
\argmin_{\theta\in \Theta}
\sum_{x\in P_j}
r_n(x)\,p_n(x\mid\theta)^{-\tau}.
\label{eqn:entropic-codepoint-expfam-start}
\end{align}
Substituting \eqref{eqn:expfam} into
\eqref{eqn:entropic-codepoint-expfam-start} gives
\begin{align}
p_n(x\mid\theta)^{-\tau}
=
h(x)^{-\tau}
\exp\!\big(
-\tau \eta(\theta)^\top T(x)
+
\tau n A(\eta(\theta))
\big),
\label{eq:expfam-tilt}
\end{align}
so the fixed-cell objective becomes
\begin{align}
C_{j,\tau}(\theta)
&=
\exp\!\big(\tau n A(\eta(\theta))\big)  \sum_{x\in P_j}
r_n(x)\,h(x)^{-\tau}
\exp\!\big(
-\tau \eta(\theta)^\top T(x)
\big).
\label{eq:Aj-expfam}
\end{align}
To express the first-order condition cleanly, let
\begin{align}
B_{j,\tau}(\nu)
:=
\sum_{x\in P_j}
r_n(x)\,h(x)^{-\tau}e^{-\tau \nu^\top T(x)},
\label{eq:Bj}
\end{align}
so that
\begin{align}
C_{j,\tau}(\nu)=e^{\tau n A(\nu)}B_{j,\tau}(\nu).
\label{eq:Ajnu}
\end{align}
Differentiating $\log C_{j,\tau}(\nu)$ with respect to $\nu$ yields
\begin{align}
\tau n \nabla A(\nu)
-
\tau
\frac{
\sum_{x\in P_j}
r_n(x)\,h(x)^{-\tau}e^{-\tau \nu^\top T(x)}T(x)
}{
\sum_{x\in P_j}
r_n(x)\,h(x)^{-\tau}e^{-\tau \nu^\top T(x)}
}.
\label{eq:logAj-derivative}
\end{align}
Setting \eqref{eq:logAj-derivative} equal to zero gives the following result.
\begin{proposition}[Tilted moment matching]
\label{prop:tilted-moment}
Let $\nu_{j,\tau}^*=\eta(\theta_{j,\tau}^*)$ be a minimizer of \(C_{j,\tau}(\nu)\). Then
\begin{align}
n\,\nabla A(\nu_{j,\tau}^*)
=
\sum_{x\in P_j}
w_{j,\tau}(x;\nu_{j,\tau}^*)\,T(x),
\label{eq:tilted-moment}
\end{align}
where
\begin{align}
w_{j,\tau}(x;\nu)
=
\frac{
r_n(x)\,h(x)^{-\tau}e^{-\tau \nu^\top T(x)}
}{
\sum_{y\in P_j}
r_n(y)\,h(y)^{-\tau}e^{-\tau \nu^\top T(y)}
},
\qquad x\in P_j.
\label{eq:tilted-weights}
\end{align}
In particular, in a regular canonical exponential family, the entropic SMML codepoint is the model parameter whose mean-value parameter matches a \emph{tilted} average of the sufficient statistic over the cell.
\end{proposition}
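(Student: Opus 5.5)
The plan is a first-order optimality argument in natural coordinates. I will work with $\log C_{j,\tau}(\nu)$ rather than $C_{j,\tau}(\nu)$ itself. Since $C_{j,\tau}>0$ on $\mathcal N$, the two functions have the same minimizers. Because the family is regular, $\mathcal N$ is open, so the minimizer $\nu_{j,\tau}^*$ is an interior point. Once differentiability is in hand, the gradient must therefore vanish there.

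The first step is to justify that $\log C_{j,\tau}(\nu)=\tau nA(\nu)+\log B_{j,\tau}(\nu)$ is differentiable on $\mathcal N$, using the factorization \eqref{eq:Ajnu}.
\begin{itemize}
\item $A$ is smooth on the interior of the natural parameter space, which is standard for regular exponential families.
\item For $B_{j,\tau}$, the summand $r_n(x)h(x)^{-\tau}e^{-\tau\nu^\top T(x)}$ is smooth in $\nu$. If $P_j$ is finite, differentiation of the sum is immediate.
\item If $P_j$ is countably infinite, I need $B_{j,\tau}(\nu)<\infty$ on a neighborhood of $\nu^*_{j,\tau}$. This holds because $\nu^*_{j,\tau}$ is a minimizer, so $C_{j,\tau}(\nu^*_{j,\tau})<\infty$; I assume as a standing condition that the set where $B_{j,\tau}$ is finite contains an open neighborhood. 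On that neighborhood, $B_{j,\tau}$ is a Laplace-transform-type series. Such a series is convex and analytic on the interior of its domain, so term-by-term differentiation is justified by dominated convergence. The bound $\|T(x)\|e^{-\tau\nu^\top T(x)}\le c_\epsilon\,(e^{-\tau(\nu+\epsilon u)^\top T(x)}+e^{-\tau(\nu-\epsilon u)^\top T(x)})$ supplies the domination, summed over coordinate directions $u$.
\end{itemize}

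The second step is to compute the gradient. This gives exactly \eqref{eq:logAj-derivative}, namely $\tau n\nabla A(\nu)-\tau\,\nabla_\nu(-\tfrac1\tau\log B_{j,\tau})$ written as a ratio of sums. Setting it to zero at $\nu^*_{j,\tau}$ and dividing by $\tau>0$ gives $n\nabla A(\nu^*)=\sum_{x\in P_j}w_{j,\tau}(x;\nu^*)T(x)$, with the weights $w_{j,\tau}$ of \eqref{eq:tilted-weights}. These weights are positive and sum to one over $P_j$, so the right-hand side is a tilted average of $T$ over the cell.

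The third step handles the interpretive clause. In a regular canonical family, $n\nabla A(\nu)=\mathbb E_\nu[T(X_n)]$ is the mean-value parameter. The identity therefore states that the codepoint's mean matches the tilted cell average.

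I expect the main obstacle to be the differentiability of $B_{j,\tau}$ when the cell is infinite, that is, the interchange of sum and derivative. I will handle it with the dominated-convergence bound above, stated under the openness assumption on the finiteness domain; for finite cells it is trivial.
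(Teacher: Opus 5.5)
Your proposal is correct and follows the paper's proof: write $\log C_{j,\tau}(\nu)=\tau nA(\nu)+\log B_{j,\tau}(\nu)$, differentiate, set the gradient to zero at the minimizer, and divide by $\tau$ so the weights normalize. You add two pieces of regularity that the paper uses without comment: interiority of the minimizer via openness of $\mathcal N$, and term-by-term differentiation of $B_{j,\tau}$ on infinite cells. The latter needs $B_{j,\tau}$ to be finite on a neighborhood of $\nu_{j,\tau}^*$, which minimality alone does not give, so you are right to state it as an explicit assumption.
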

\begin{proof}
Equation~\eqref{eq:Ajnu} implies
\[
\log C_{j,\tau}(\nu)
=
\tau n A(\nu)+\log B_{j,\tau}(\nu).
\]
Differentiating both sides with respect to $\nu$ gives
\eqref{eq:logAj-derivative}. Setting the gradient equal to zero and normalizing the resulting weights yields \eqref{eq:tilted-moment}--\eqref{eq:tilted-weights}.
\end{proof}
Proposition~\ref{prop:tilted-moment} is the entropic analogue of the ordinary SMML moment-matching condition in canonical exponential families~\cite{MakalicSchmidt26b}. In the ordinary case, the codepoint matches the $r_n$-weighted average of the sufficient statistic over the cell; under the entropic deformation, the weights are exponentially tilted toward observations that are costly under the current codepoint. In the limit $\tau\downarrow 0$, the tilted weights reduce to the ordinary cellwise weights and \eqref{eq:tilted-moment} collapses to the classical moment-matching equation established in~\cite{MakalicSchmidt26b}.

Regular exponential families carry a dually flat information geometry in which KL divergence is the Bregman divergence generated by the log-partition function. In this geometry, ordinary SMML codepoints are KL/Bregman centroids of the cellwise distributions~\cite{MakalicSchmidt26b}. Under entropic SMML, the same geometric picture persists, but with the cellwise distribution replaced by the exponentially tilted weights~\eqref{eq:tilted-weights}. Thus the entropic codepoint is a \emph{tilted Bregman centroid}~\cite{BanerjeeEtAl05} that minimizes a risk-sensitive deformation of the ordinary cellwise KL projection. 
\begin{lemma}[Entropic SMML as a tilted \(m\)-projection]
\label{lem:em-projection}
Fix a partition cell \(P_j\), and let
\[
\tilde s_j(x)
=
\frac{r_n(x)}{r_n(P_j)},
\qquad x\in P_j ,
\]
and extend \(\tilde s_j\) by zero outside \(P_j\).
Then the ordinary SMML codepoint satisfies
\[
\theta_j^*
\in
\argmin_{\theta\in\Theta}
D_{\mathrm{KL}}\!\left(
\tilde s_j\,\middle\|\,p_n(\cdot\mid\theta)
\right).
\]
Now suppose \(p_n(\cdot\mid\nu)\) is a full regular exponential family with open convex natural parameter space \(\mathcal N\), and write
\[
p_n(x\mid\nu)
=
h(x)\exp\{\nu^\top T(x)-nA(\nu)\}.
\]
Let \(\nu_{j,\tau}^*\in\mathcal N\) be a minimizer of
\[
C_{j,\tau}(\nu)
=
\sum_{x\in P_j} r_n(x)p_n(x\mid\nu)^{-\tau}.
\]
Define
\[
s_{j,\tau}^*(x)
=
\frac{
r_n(x)p_n(x\mid\nu_{j,\tau}^*)^{-\tau}
}{
\sum_{y\in P_j} r_n(y)p_n(y\mid\nu_{j,\tau}^*)^{-\tau}
},
\qquad x\in P_j ,
\]
and extend \(s_{j,\tau}^*\) by zero outside \(P_j\). Then
\[
\nu_{j,\tau}^*
\in
\argmin_{\nu\in\mathcal N}
D_{\mathrm{KL}}\!\left(
s_{j,\tau}^*\,\middle\|\,p_n(\cdot\mid\nu)
\right).
\]
In particular, under a minimal representation of the full regular exponential family, this minimizer is unique.
\end{lemma}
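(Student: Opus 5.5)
The plan is to reduce both claims to first-order conditions, using convexity in the exponential-family case so that stationarity is also sufficient for optimality. The two parts are independent.

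\textbf{Ordinary part.} Since $\tilde s_j$ is supported on $P_j$, I will expand
\[
D_{\mathrm{KL}}(\tilde s_j\|p_n(\cdot\mid\theta))=\sum_{x\in P_j}\tilde s_j(x)\log\tilde s_j(x)-\frac{1}{r_n(P_j)}\sum_{x\in P_j}r_n(x)\log p_n(x\mid\theta).
\]
The first term does not depend on $\theta$, and $r_n(P_j)>0$ is a positive constant. Hence minimizing the divergence over $\theta$ is the same as maximizing $\sum_{x\in P_j}r_n(x)\log p_n(x\mid\theta)$, which is exactly the ordinary codepoint condition \eqref{eqn:ordinary-codepoint}. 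The argmin sets therefore coincide.

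\textbf{Entropic part.} For any $s$ supported on $P_j$, the same expansion gives
\[
D_{\mathrm{KL}}(s\|p_n(\cdot\mid\nu))=\mathrm{const}-\sum_{x\in P_j}s(x)\log h(x)-\nu^\top\mathbb E_s[T]+nA(\nu).
\]
As a function of $\nu$ this is convex on the open convex set $\mathcal N$, because $A$ is convex and the remaining terms are affine in $\nu$. Its minimizers are therefore exactly the stationary points, that is, the solutions of $n\nabla A(\nu)=\mathbb E_s[T]$.

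Now take $s=s^*_{j,\tau}$, with the tilt frozen at $\nu^*_{j,\tau}$. Expanding $p_n(x\mid\nu^*_{j,\tau})^{-\tau}$, the factor $e^{\tau nA(\nu^*_{j,\tau})}$ is common to numerator and denominator and cancels. What remains is $s^*_{j,\tau}(x)=w_{j,\tau}(x;\nu^*_{j,\tau})$, with the weights of \eqref{eq:tilted-weights}.

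Since $\nu^*_{j,\tau}$ is an interior minimizer of the differentiable function $C_{j,\tau}$, Proposition~\ref{prop:tilted-moment} gives
\[
n\nabla A(\nu^*_{j,\tau})=\sum_{x\in P_j} w_{j,\tau}(x;\nu^*_{j,\tau})\,T(x)=\mathbb E_{s^*_{j,\tau}}[T].
\]
This is the stationarity condition for the KL objective with $s=s^*_{j,\tau}$. By the convexity just noted, $\nu^*_{j,\tau}$ is a global minimizer of $D_{\mathrm{KL}}(s^*_{j,\tau}\|p_n(\cdot\mid\nu))$ over $\mathcal N$.

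\textbf{Uniqueness.} Under a minimal representation, the covariance $\nabla^2A$ is positive definite, so $A$ is strictly convex. The KL objective is then strictly convex in $\nu$, and its minimizer is unique.

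\textbf{Main obstacle.} The delicate step is the fixed-point nature of the claim. The target distribution $s^*_{j,\tau}$ depends on $\nu^*_{j,\tau}$, so the statement is not a standard $m$-projection. It only becomes one after freezing the tilt and observing that the stationarity condition for $C_{j,\tau}$ coincides with moment matching for the frozen $s^*_{j,\tau}$. Two regularity points need checking here:
\begin{itemize}
\item $C_{j,\tau}$ must be finite and differentiable near $\nu^*_{j,\tau}$, with differentiation under the sum justified. This is automatic for finite cells. For countable cells I would invoke dominated convergence on the interior of the set where $C_{j,\tau}<\infty$.
\item $\mathbb E_{s^*_{j,\tau}}\|T\|<\infty$, so that the KL objective is finite. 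This follows from finiteness of $C_{j,\tau}$ in a neighbourhood of $\nu^*_{j,\tau}$.
\end{itemize}
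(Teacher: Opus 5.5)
Your argument is the paper's proof step for step. The ordinary part is handled by expanding the divergence. The entropic part matches the first-order condition for $C_{j,\tau}$ with stationarity of the convex map $\nu\mapsto D_{\mathrm{KL}}(s^*_{j,\tau}\|p_n(\cdot\mid\nu))$; the paper differentiates $C_{j,\tau}$ directly, while you cite Proposition~\ref{prop:tilted-moment} after checking $s^*_{j,\tau}=w_{j,\tau}(\cdot;\nu^*_{j,\tau})$. Uniqueness comes from strict convexity of $A$. For finite cells (e.g.\ the binomial example) this is complete.

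\textbf{Gap (shared with the paper).} The weak step is ``$\nu^*_{j,\tau}$ is an interior minimizer of the differentiable function $C_{j,\tau}$''. Openness of $\mathcal N$ does not place $\nu^*_{j,\tau}$ in the interior of $\{\nu\in\mathcal N: C_{j,\tau}(\nu)<\infty\}$, and both of your regularity bullets silently presuppose that it does. For an infinite cell the minimizer can sit on the boundary of that set. There only a one-sided derivative exists, stationarity fails, and the conclusion of the lemma can be false.

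\textbf{Counterexample.} Take the geometric family $p_1(x\mid\nu)=(1-e^\nu)e^{\nu x}$, $x=0,1,2,\dots$, with $\mathcal N=(-\infty,0)$. Put a prior on $\rho=e^\nu$ with density proportional to $(1/2-\rho)^3$ on $(0,1/2)$, and take the single cell $P_1=\mathcal X_n$ with $\tau=1$.
\begin{itemize}
\item $C_{1,1}(\nu)<\infty$ if and only if $\nu\ge-\log 2$, and $\log C_{1,1}$ is convex on that set.
\item The tilted law at $-\log 2$ is $s^*(x)\propto r_n(x)2^x$, with mean $3/7$.
\item Hence the right derivative of $\log C_{1,1}$ at $-\log 2$ is $A'(-\log 2)-3/7=1-3/7=4/7>0$. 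So $\nu^*=-\log 2$ is the unique minimizer of $C_{1,1}$ over $\mathcal N$.
\item Yet the $m$-projection of $s^*$ solves $e^\nu/(1-e^\nu)=3/7$, i.e.\ $e^\nu=3/10\neq 1/2$.
\end{itemize}
Your dominated-convergence remark only applies once interiority is known. The lemma, and both proofs, need finiteness of $C_{j,\tau}$ on a neighbourhood of $\nu^*_{j,\tau}$ as an explicit hypothesis (automatic for finite cells), or an argument ruling out the boundary case.
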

\begin{proof}
For the ordinary SMML codepoint, minimizing the fixed-cell expected detail codelength over \(\theta\) is equivalent to minimizing
\[
\sum_{x\in P_j} r_n(x)[-\log p_n(x\mid\theta)].
\]
Since
\[
\sum_{x\in P_j} r_n(x)[-\log p_n(x\mid\theta)]
=
r_n(P_j)
\sum_{x\in P_j}
\tilde s_j(x)[-\log p_n(x\mid\theta)],
\]
and
\[
D_{\mathrm{KL}}\!\left(
\tilde s_j\,\middle\|\,p_n(\cdot\mid\theta)
\right)
=
\sum_{x\in P_j}
\tilde s_j(x)\log \tilde s_j(x)
-
\sum_{x\in P_j}
\tilde s_j(x)\log p_n(x\mid\theta),
\]
the two objectives differ only by a positive multiplicative constant and an additive constant independent of \(\theta\). Hence the ordinary SMML codepoint is the \(m\)-projection of \(\tilde s_j\) onto the model family.

Now consider the entropic criterion in natural coordinates. Let \(\nu_{j,\tau}^*\in\mathcal N\) be a minimizer of
\[
C_{j,\tau}(\nu)
=
\sum_{x\in P_j} r_n(x)p_n(x\mid\nu)^{-\tau}.
\]
Differentiating gives
\[
\nabla_\nu C_{j,\tau}(\nu)
=
-\tau
\sum_{x\in P_j}
r_n(x)p_n(x\mid\nu)^{-\tau}
\nabla_\nu \log p_n(x\mid\nu).
\]
At the minimizer \(\nu_{j,\tau}^*\in\mathcal N\),
\[
0
=
\sum_{x\in P_j}
s_{j,\tau}^*(x)
\nabla_\nu \log p_n(x\mid\nu_{j,\tau}^*).
\]
Since
\[
\nabla_\nu \log p_n(x\mid\nu)
=
T(x)-n\nabla A(\nu),
\]
this condition is equivalent to
\[
n\nabla A(\nu_{j,\tau}^*)
=
\sum_{x\in P_j}
s_{j,\tau}^*(x)T(x).
\]
On the other hand,
\[
D_{\mathrm{KL}}\!\left(
s_{j,\tau}^*\,\middle\|\,p_n(\cdot\mid\nu)
\right)
=
\mathrm{const}
-
\sum_{x\in P_j}
s_{j,\tau}^*(x)\log p_n(x\mid\nu).
\]
Using the exponential-family form,
\begin{align*}
-
\sum_{x\in P_j}
s_{j,\tau}^*(x)\log p_n(x\mid\nu)
&= \\
 nA(\nu)
-
&\nu^\top
\sum_{x\in P_j}s_{j,\tau}^*(x)T(x)
+
\mathrm{const}.
\end{align*}
Therefore
\[
\nabla_\nu
D_{\mathrm{KL}}\!\left(
s_{j,\tau}^*\,\middle\|\,p_n(\cdot\mid\nu)
\right)
=
n\nabla A(\nu)
-
\sum_{x\in P_j}s_{j,\tau}^*(x)T(x).
\]
This gradient vanishes at \(\nu=\nu_{j,\tau}^*\). 
The function
\[
\nu
\mapsto
D_{\mathrm{KL}}\!\left(
s_{j,\tau}^*\,\middle\|\,p_n(\cdot\mid\nu)
\right)
\]
is convex on the open convex natural parameter space \(\mathcal N\). Hence stationarity at \(\nu_{j,\tau}^*\) implies global optimality over \(\mathcal N\). Under a minimal representation, \(A\) is strictly convex, and hence the KL objective is strictly convex, so the minimizer is unique.
\end{proof}
\begin{remark}[Amari $e$--$m$ geometric interpretation]
Lemma~\ref{lem:em-projection} admits a direct interpretation in terms of Amari's dual affine connections~\cite{AmariNagaoka00}. Ordinary SMML codepoints arise as $m$-projections of the cellwise predictive distribution onto the model manifold, minimizing the forward Kullback--Leibler divergence. Entropic SMML applies the same $m$-projection after an exponential ($e$-geodesic) tilt of the predictive distribution induced by the entropic criterion. Thus, entropic SMML can be viewed as an $e$--$m$ procedure: an $e$-type exponential tilting of the source distribution followed by an $m$-projection onto the model manifold. As $\tau\downarrow 0$, the $e$-tilt vanishes, the tilted distribution converges to the original cellwise distribution, and the entropic SMML codepoints converge to the ordinary SMML $m$-projections. 
\end{remark}
\subsection{Affine Cell Structure}
We now consider the partition geometry for a fixed codebook. By~\eqref{eqn:fixed-codebook-partition}, an observation $x$ is assigned to cell $j$ rather than cell $\ell$ whenever
\begin{align}
-\log q_j - \log p_n(x\mid\theta_j)
\le
-\log q_\ell - \log p_n(x\mid\theta_\ell).
\label{eq:expfam-partition-start}
\end{align}
Substituting the exponential-family form \eqref{eqn:expfam} and canceling the common term $\log h(x)$ yields
\begin{align}
\big(\eta(\theta_j)-\eta(\theta_\ell)\big)^\top T(x)
\ge
\log\frac{q_\ell}{q_j}
+
n\Big(
A(\eta(\theta_j))-A(\eta(\theta_\ell))
\Big).
\label{eq:expfam-affine-boundary}
\end{align}
Thus each pairwise cell boundary is affine in the sufficient statistic $T(x)$.
%
\begin{theorem}[Polyhedral cells in sufficient-statistic space]
\label{thm:polyhedral-entropic}
Fix a codebook $(\mathcal P,q,\theta)\in\mathcal C_n$ for the exponential family \eqref{eqn:expfam}. For each $j=1,\dots,k$, define
\begin{align}
V_j
&=
\bigcap_{\ell\neq j}
\left\{
t\in\mathbb R^d:
\big(\eta(\theta_j)-\eta(\theta_\ell)\big)^\top t 
\right.
\left.
\ge
\log\frac{q_\ell}{q_j}
+
n\big(A(\eta(\theta_j))-A(\eta(\theta_\ell))\big)
\right\}.
\label{eq:Vj-polyhedral}
\end{align}
Then each $V_j$ is a convex polyhedron, and, up to ties on boundaries,
\begin{align}
P_j
=
\{x\in\mathcal X_n : T(x)\in V_j\}.
\label{eq:polyhedral-pullback}
\end{align}
\end{theorem}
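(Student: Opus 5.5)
The plan is to read the statement as the pointwise minimum-codelength rule \eqref{eqn:fixed-codebook-partition}, rewritten through the exponential-family likelihood \eqref{eqn:expfam}. For a fixed codebook we take the partition to be the one induced by that rule. Then $x\in P_j$ exactly when $j$ attains $\min_\ell\{-\log q_\ell-\log p_n(x\mid\theta_\ell)\}$, with ties broken by some fixed convention.

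The first step is to fix a pair $j\neq\ell$ and substitute \eqref{eqn:expfam} into the pairwise comparison \eqref{eq:expfam-partition-start}. The common term $-\log h(x)$ cancels, which is legitimate because $h(x)>0$ wherever $r_n(x)>0$, and that holds everywhere by assumption. After rearranging we get exactly the inequality \eqref{eq:expfam-affine-boundary}. So $x$ is weakly preferred to cell $\ell$ by cell $j$ if and only if $T(x)$ lies in the closed half-space
\[
H_{j\ell}=\bigl\{t:(\eta(\theta_j)-\eta(\theta_\ell))^\top t\ge \log(q_\ell/q_j)+n(A(\eta(\theta_j))-A(\eta(\theta_\ell)))\bigr\}.
\]
Because the codepoints lie in the natural parameter space, $A(\eta(\theta_\ell))$ is finite, and because $q_\ell>0$, the right-hand side is a finite constant. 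If $\eta(\theta_j)=\eta(\theta_\ell)$, then $H_{j\ell}$ is either all of $\mathbb R^d$ or empty, and both are still polyhedra, so this degenerate case can be noted and set aside.

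The second step is to intersect over $\ell\neq j$. Since $j$ is a minimizer precisely when it beats every $\ell$ weakly, we get $\{x: j\in\argmin\}=\{x:T(x)\in V_j\}$. The set $V_j$ is a finite intersection of closed half-spaces, so it is a convex polyhedron. This gives \eqref{eq:polyhedral-pullback}, except that points with $T(x)$ on a shared boundary $V_j\cap V_\ell$ may be assigned to either cell by the tie-breaking rule. That ambiguity is exactly the phrase ``up to ties on boundaries''. Explicitly, $\{x:T(x)\in \operatorname{int}\text{-strict}(V_j)\}\subseteq P_j\subseteq\{x:T(x)\in V_j\}$, where the inner set uses strict inequalities.

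The main obstacle is interpretive rather than technical. The statement says ``fix a codebook $(\mathcal P,q,\theta)$'', but the conclusion only holds if $\mathcal P$ is the partition induced by $(q,\theta)$. For an arbitrary partition it fails. I would therefore state the theorem under that convention, as in \eqref{eqn:fixed-codebook-partition}, and also note that any entropic SMML optimizer satisfies it. To see this, take an optimal codebook and reassign a point $x$ to a cell with strictly smaller codelength $\Lambda(x)$. Every term in $\mathbb E_{r_n}e^{\tau\Lambda}$ is then weakly decreased, and the term for $x$ strictly decreases because $r_n(x)>0$, which contradicts optimality. The one subtlety is that reassignment could empty a cell. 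In that case we drop the cell and renormalize $q$, which only decreases the $-\log q$ terms, so the argument still goes through.
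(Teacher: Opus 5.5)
Your proposal is correct and follows the same route as the paper. You substitute the exponential-family form into the pairwise minimum-codelength comparison to get closed half-spaces in $t$-space, intersect the finitely many of them to obtain $V_j$, and read off the pullback from the fixed-codebook rule \eqref{eqn:fixed-codebook-partition}; your additional care (positivity of $h(x)$, the degenerate case $\eta(\theta_j)=\eta(\theta_\ell)$, and stating explicitly that $\mathcal P$ must be the partition induced by $(q,\theta)$, which any entropic SMML optimizer satisfies) fills in points that the paper's three-line proof leaves implicit.
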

\begin{proof}
For fixed $j$ and $\ell$, \eqref{eq:expfam-affine-boundary} defines a closed half-space in $t$-space. The intersection over $\ell\neq j$ is therefore a convex polyhedron. The pullback characterization~\eqref{eq:polyhedral-pullback} follows directly from the fixed-codebook decision rule.
\end{proof}
Theorem~\ref{thm:polyhedral-entropic} shows that the affine cell structure of ordinary exponential-family SMML~\cite{MakalicSchmidt26b} is unchanged under the entropic deformation. This result is consistent with Dowty's analysis of ordinary SMML for exponential families with continuous sufficient statistics~\cite{Dowty13}, in which the cells of an SMML estimator are characterized as convex polytopes; Theorem~\ref{thm:polyhedral-entropic} shows that the same affine-cell geometry persists under the entropic deformation considered here. The role of the risk parameter is not to alter the form of the partition rule, but to modify the location of the codepoints and the assertion weights through the tilted criterion. In this sense, the exponential-family geometry is stable across the entire entropic family. 
%
%
\begin{corollary}
If $\eta(\theta)=\theta$, then
\[
\nabla A(\theta_{j,\tau}^*)
=
\frac{1}{n}
\sum_{x\in P_j} w_{j,\tau}(x;\theta_{j,\tau}^*) T(x),
\]
so the mean-value parameter of the entropic codepoint is the tilted average of
the normalized sufficient statistic over the cell.
\end{corollary}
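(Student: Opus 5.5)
This corollary is a direct specialization of Proposition~\ref{prop:tilted-moment}, so the plan is to substitute the canonical parametrization and rearrange. When $\eta(\theta)=\theta$, the natural parameter coincides with the model parameter. Hence $\nu_{j,\tau}^*=\eta(\theta_{j,\tau}^*)=\theta_{j,\tau}^*$, and the minimizer of $C_{j,\tau}$ over $\Theta$ is the same object as the minimizer over natural coordinates.

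Substituting $\nu_{j,\tau}^*=\theta_{j,\tau}^*$ into \eqref{eq:tilted-moment} gives
\[
n\nabla A(\theta_{j,\tau}^*)=\sum_{x\in P_j} w_{j,\tau}(x;\theta_{j,\tau}^*)\,T(x),
\]
where the weights are those of \eqref{eq:tilted-weights} evaluated at $\theta_{j,\tau}^*$. Dividing both sides by $n>0$ yields the displayed identity.

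For the interpretation, two facts suffice.
\begin{itemize}
\item In a regular exponential family, $n\nabla A(\theta)=\mathbb E_{\theta}[T(X_n)]$, so $\nabla A(\theta)$ is the mean-value parameter of the normalized statistic $T/n$.
\item The weights $w_{j,\tau}(\cdot;\theta_{j,\tau}^*)$ are nonnegative and sum to one over $P_j$, so the right-hand side is a convex (tilted) average of $T(x)/n$ over the cell.
\end{itemize}

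The only point that needs care is the first-order condition behind Proposition~\ref{prop:tilted-moment}. It requires the minimizer to lie in the interior of the open natural parameter space, so that the gradient vanishes there. This is assumed implicitly by taking $\nu_{j,\tau}^*\in\mathcal N$ as in Lemma~\ref{lem:em-projection}. Once that is granted, the rest is a substitution.
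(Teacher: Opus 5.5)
Your proposal is correct and matches the paper's proof: set $\nu_{j,\tau}^*=\theta_{j,\tau}^*$ in Proposition~\ref{prop:tilted-moment}, identify $\nabla A$ as the mean-value parameter, and divide by $n$. Your extra remarks are accurate points the paper leaves implicit: the weights form a convex combination, and the first-order condition presupposes a minimizer attained in the open set $\mathcal N$, so "interior" is automatic and the real requirement is attainment.
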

\begin{proof}
If $\eta(\theta)=\theta$, then $\nabla A(\theta)$ is the mean-value parameter of the regular canonical exponential family. Equation~\eqref{eq:tilted-moment} therefore yields the claim directly after dividing both sides by $n$.
\end{proof}
\begin{figure*}[t]
\centering
\includegraphics[width=0.95\textwidth]{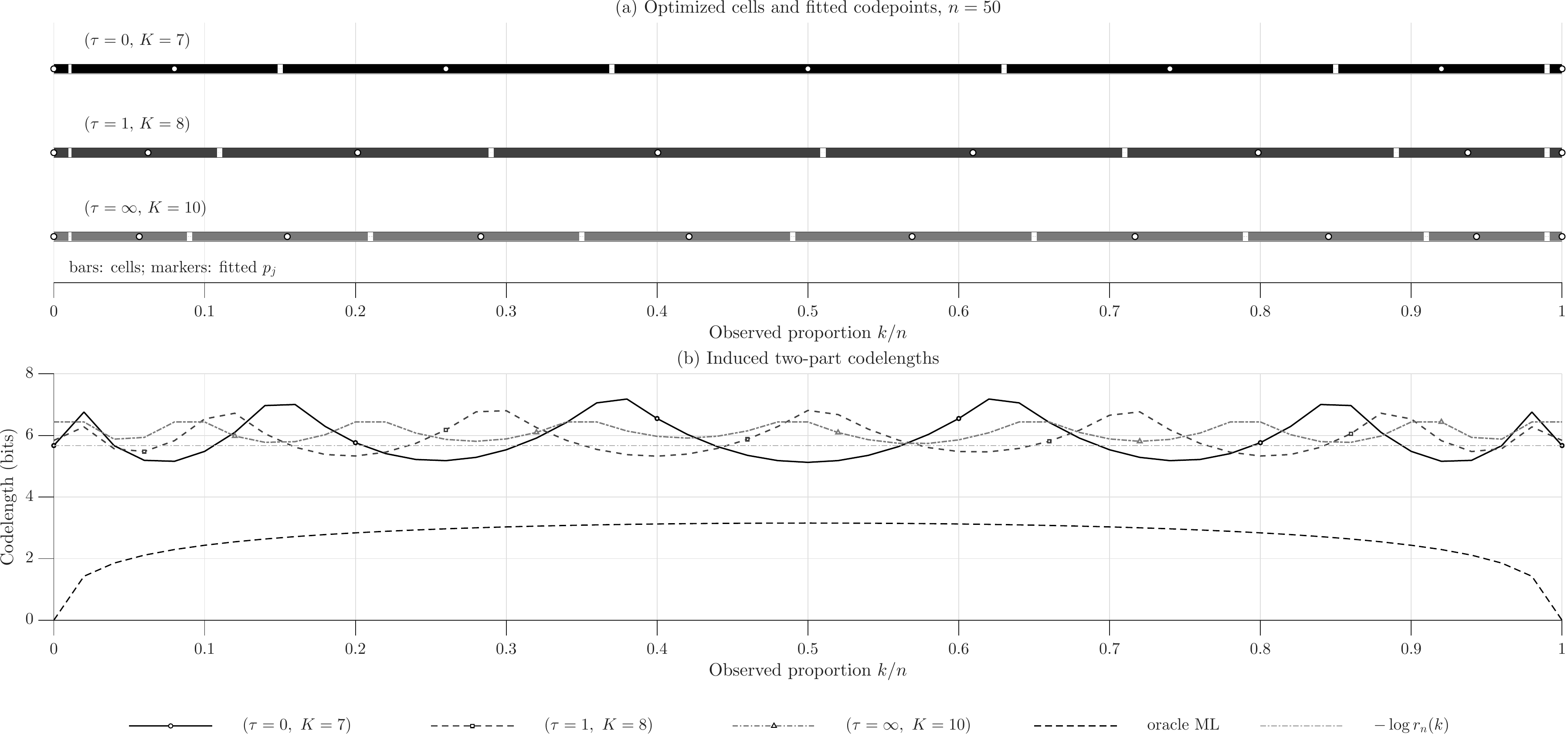}
\caption{Binomial comparison of ordinary SMML, entropic SMML, and the worst-case codelength endpoint for \(n=50\), with the number of cells optimized separately for each criterion. Panel (a) shows the optimized contiguous cells and fitted binomial codepoints. Panel (b) shows the induced two-part codelengths, together with the oracle maximum likelihood codelength and the prior-predictive ideal codelength \(-\log r_n(k)\). 
}
\label{fig:binomial-tau-comparison-freeK}
\end{figure*}
\subsection{A Binomial Example}
\label{sec:binomial-example}
As an illustration of the exponential family theory, consider the binomial model
\begin{align}
\label{eq:binom-model}
X_n \mid p \sim \mathrm{Binomial}(n,p),
\qquad
p\in(0,1),
\end{align}
with uniform prior
\begin{align}
\label{eq:binom-prior}
\pi(p)=1,\qquad 0<p<1.
\end{align}
The data space is $\mathcal X_n=\{0,1,\dots,n\}$, and the model likelihood is
\begin{align}
\label{eq:binom-likelihood}
p_n(x\mid p)
=
\binom{n}{x}p^x(1-p)^{n-x},
\qquad x=0,1,\dots,n.
\end{align}
This is a one-dimensional canonical exponential family with sufficient statistic $T(x)=x$, natural parameter
\[
\eta(p)=\log\frac{p}{1-p},
\]
and log-partition function
\[
A(\eta)=\log(1+e^\eta).
\]
The uniform prior yields an especially simple prior predictive distribution
\begin{align}
\label{eq:binom-prior-predictive}
r_n(x)=\frac{1}{n+1},
\qquad x=0,1,\dots,n.
\end{align}
Because the binomial model is a regular canonical exponential family, the general entropic SMML codepoint equation from Section~\ref{sec:exponential:families} simplifies to a scalar tilted moment equation.

\begin{proposition}[Binomial entropic codepoints and cell geometry]
\label{prop:binom-entropic}
Let $\mathcal P=\{P_1,\dots,P_k\}$ be a fixed partition of $\{0,1,\dots,n\}$, and let $p_j\in(0,1)$ denote the codepoint associated with cell $P_j$.

\begin{enumerate}
\item[\textnormal{1)}] The fixed-cell entropic objective is
\begin{align}
\label{eq:binom-cell-objective}
C_{j,\tau}(p)
=
\frac{1}{n+1}
\sum_{x\in P_j}
\binom{n}{x}^{-\tau}
p^{-\tau x}(1-p)^{-\tau(n-x)}.
\end{align}
\item[\textnormal{2)}] Any entropic SMML codepoint $p_{j,\tau}^*$ satisfies
\begin{align}
\label{eq:binom-fixed-point}
n\,p_{j,\tau}^*
=
\sum_{x\in P_j}
w_{j,\tau}(x;p_{j,\tau}^*)\,x,
\end{align}
where
\begin{align}
\label{eq:binom-weights}
w_{j,\tau}(x;p)
=
\frac{
\binom{n}{x}^{-\tau}
\left(\frac{p}{1-p}\right)^{-\tau x}
}{
\sum_{y\in P_j}
\binom{n}{y}^{-\tau}
\left(\frac{p}{1-p}\right)^{-\tau y}
},
\qquad x\in P_j.
\end{align}
\item[\textnormal{3)}] For a fixed codebook $(\mathcal P,q,p_1,\dots,p_k)$, each cell is an interval in $\{0,1,\dots,n\}$. More precisely, the pairwise boundary between cells $j$ and $\ell$ is determined by
\begin{align}
\label{eq:binom-boundary}
x\log\frac{p_j(1-p_\ell)}{p_\ell(1-p_j)}
\ge
\log\frac{q_\ell}{q_j}
+
n\log\frac{1-p_\ell}{1-p_j},
\end{align}
which is affine in $x$.
\end{enumerate}
\end{proposition}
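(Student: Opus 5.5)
The plan is to specialise the general exponential-family results (Proposition~\ref{prop:tilted-moment} and Theorem~\ref{thm:polyhedral-entropic}) to the binomial model. Throughout, I will use the canonical representation with $T(x)=x$, $h(x)=\binom{n}{x}$, natural parameter $\eta=\log\{p/(1-p)\}$ and per-observation log-partition $A(\eta)=\log(1+e^\eta)$. This gives $nA(\eta)=-n\log(1-p)$, matching \eqref{eqn:expfam}.

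\textbf{Part 1.} Substitute $r_n(x)=1/(n+1)$ from \eqref{eq:binom-prior-predictive} and the likelihood \eqref{eq:binom-likelihood} directly into the definition \eqref{eqn:Ajtau} of $C_{j,\tau}$. Raising the likelihood to the power $-\tau$ gives \eqref{eq:binom-cell-objective} immediately.

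\textbf{Part 2.} Work in natural coordinates. Note that $p\mapsto\eta(p)$ is a diffeomorphism of $(0,1)$ onto $\mathbb R$, so an interior minimiser $p^*_{j,\tau}$ corresponds to an interior minimiser $\nu^*$ of $C_{j,\tau}(\nu)$. Proposition~\ref{prop:tilted-moment} then applies and yields $n\nabla A(\nu^*)=\sum_x w_{j,\tau}(x;\nu^*)x$. Since $\nabla A(\nu)=e^\nu/(1+e^\nu)=p$, the left side is $np^*_{j,\tau}$.

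For the weights, \eqref{eq:tilted-weights} reads $r_n(x)h(x)^{-\tau}e^{-\tau\nu x}$. The factor $r_n(x)=1/(n+1)$ is constant and cancels on normalisation, and $e^{-\tau\nu x}=(p/(1-p))^{-\tau x}$. This gives \eqref{eq:binom-weights}.

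The one point needing care is the phrase ``any entropic SMML codepoint''. I will read this as any minimiser in the open set $(0,1)$, where the first-order condition is valid. I will remark that the minimiser is attained in the interior whenever the cell contains both an $x<n$ and an $x>0$, since $C_{j,\tau}(p)\to\infty$ at the corresponding endpoint. If a cell is $\{0\}$ or $\{n\}$, no interior minimiser exists, and the statement is vacuous or is to be read in the limiting sense.

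\textbf{Part 3.} Specialise \eqref{eq:expfam-affine-boundary} to this model. Compute
\[
\eta(p_j)-\eta(p_\ell)=\log\frac{p_j(1-p_\ell)}{p_\ell(1-p_j)}
\]
and
\[
n\bigl(A(\eta(p_j))-A(\eta(p_\ell))\bigr)=n\log\frac{1-p_\ell}{1-p_j}.
\]
Together these give \eqref{eq:binom-boundary}, which is affine in $x$.

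To show that cells are intervals, I will invoke Theorem~\ref{thm:polyhedral-entropic} with $d=1$. Each $V_j$ is an intersection of half-lines, hence an interval of $\mathbb R$, so $P_j=\{x:x\in V_j\}$ is a set of consecutive integers. This holds up to tie-breaking on boundaries. It also requires the convention that codepoints with $p_j=p_\ell$ (where the coefficient of $x$ vanishes and the inequality is constant in $x$) impose either no constraint or an empty one, both of which preserve convexity.

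I expect no substantial obstacle. The only delicate points are the interior-minimiser caveat in Part 2 and the handling of ties and equal codepoints in Part 3, and I will address both briefly as above.
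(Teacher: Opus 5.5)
Your proposal is correct and follows essentially the same route as the paper. Part 1 is direct substitution of $r_n(x)=1/(n+1)$ and the binomial likelihood; Part 2 specialises Proposition~\ref{prop:tilted-moment} with $\nabla A(\eta(p))=p$, with the constant $r_n$ cancelling from the weights; and Part 3 uses the affine boundary obtained by cancelling the common $\log h(x)$ term. Your extra caveats are correct refinements that the paper's proof leaves implicit. These are that an interior minimiser exists unless the cell is $\{0\}$ or $\{n\}$, and that the interval claim holds only up to ties and degenerate pairs with $p_j=p_\ell$.
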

\begin{proof}
Part~1 follows by substituting \eqref{eq:binom-likelihood} and \eqref{eq:binom-prior-predictive} into the definition of $C_{j,\tau}$. 
For Part~2, the canonical exponential family result~\eqref{eq:tilted-moment} gives
\[
n\,\nabla A(\eta(p_{j,\tau}^*))
=
\sum_{x\in P_j} w_{j,\tau}(x;\eta(p_{j,\tau}^*))\,x.
\]
Since $\nabla A(\eta(p))=p$ in the binomial model, this yields \eqref{eq:binom-fixed-point}, and the explicit weights are exactly \eqref{eq:binom-weights}.
For Part~3, the fixed-codebook decision rule compares
\[
-\log q_j-\log p_n(x\mid p_j)
\quad\text{and}\quad
-\log q_\ell-\log p_n(x\mid p_\ell).
\]
Substituting \eqref{eq:binom-likelihood} and canceling the common term $\log\binom{n}{x}$ gives \eqref{eq:binom-boundary}. Since the right-hand side is affine in $x$, each pairwise decision boundary is an interval boundary, and hence each cell is an interval in $\{0,1,\dots,n\}$.
\end{proof}

Proposition~\ref{prop:binom-entropic} makes the general theory especially transparent. The fixed-codebook partition remains one-dimensional and affine, so the polyhedral cell structure of Section~\ref{sec:exponential:families} reduces here to interval cells. At the same time, the entropic codepoint is the solution of the scalar fixed-point equation \eqref{eq:binom-fixed-point}, which is a tilted version of the ordinary SMML cellwise mean.

The ordinary SMML limit is recovered as $\tau\downarrow 0$. Indeed, the weights in \eqref{eq:binom-weights} become asymptotically uniform over each cell, so
\begin{align}
\label{eq:binom-smml-limit}
p_{j,\tau}^*
\to
\frac{1}{n|P_j|}
\sum_{x\in P_j} x,
\qquad
q_{j,\tau}^*
\to
\frac{|P_j|}{n+1},
\qquad \tau\downarrow 0,
\end{align}
recovering the ordinary SMML codebook for the binomial model. Thus, in this one-dimensional setting, entropic SMML can be viewed as a continuous deformation of the classical SMML interval partition toward increasingly robust codepoints and assertion weights.

Figure~\ref{fig:binomial-tau-comparison-freeK} illustrates the effect of the risk parameter in the binomial model for \(n=50\), with the number of cells optimized separately for each value of \(\tau\). As \(\tau\) increases from \(0\) to an intermediate value and then to \(\infty\), the optimized codebook shifts from minimizing expected codelength toward controlling the worst-case absolute codelength over the count space.
\section{Conclusion}
\label{sec:conclusion}
We introduced entropic SMML, a risk-sensitive generalization of strict minimum message length that replaces expected codelength under the prior predictive distribution with an exponential certainty equivalent. This yields a single criterion that interpolates smoothly between Bayesian average-case coding and worst-case codelength coding: ordinary SMML is recovered as $\tau\downarrow 0$, while the corresponding regret-centered limit recovers normalized maximum likelihood as $\tau\to\infty$.

The entropic formulation admits a KL-regularized variational representation and a PAC--Bayesian-type interpretation, positioning entropic SMML at the intersection of MML, PAC--Bayes, and MDL. We established a joint \(n\)--\(\tau\) asymptotic theory showing that, when the relevant fluctuation and soft-max scales are logarithmic, the average-case SMML regime holds for \(\tau_n\log n \to 0\), whereas the worst-case regime holds for \(\tau_n/\log n \to \infty\). For full regular exponential families, partitions remain affine in sufficient-statistic space, while codepoints satisfy a self-consistent tilted $m$-projection condition, equivalently a tilted Bregman-centroid condition.

Several directions remain open, including global geometric characterizations beyond fixed codebooks, broader divergence-based deformations, and non-asymptotic finite-blocklength bounds.
\section{Acknowledgments}
Generative AI tools (Claude Opus 4.6; Microsoft Copilot GPT-5.5) were used in the preparation of this manuscript for generation and exploration of ideas as well as language improvement.
\appendix
%
%
\section{Proof of Theorem~\ref{thm:joint-ntau}}
\label{sec:app}
This appendix states the assumptions and proof underlying Theorem~\ref{thm:joint-ntau} in a form that matches the scale-explicit formulation used in Section~\ref{sec:n:tau:asymptotics}. The assumptions isolate the two quantities that control the joint $n$--$\tau$ asymptotics: a fluctuation scale $v_n$ for centered codelengths and a soft-max penalty scale $a_n$ governing concentration on worst-case observations. 
\begin{assumption}[Local cumulant control]
There exist a sequence \(v_n>0\) and constants \(u_0>0\) and \(C_0<\infty\) such that, for all sufficiently large \(n\), every admissible codebook \(z=(\mathcal P,q,\theta)\in\mathcal C_n\) satisfies
\[
m_n(z)
:=
\mathbb E_{r_n}
\!\left[
\Lambda_{\mathcal P,q,\theta}(X_n)
\right]
<\infty,
\]
and, for every \(0\le u\le u_0\),
\[
\log \mathbb E_{r_n}\! \exp\!\Bigl(
u\bigl(\Lambda_{\mathcal P,q,\theta}(X_n)-m_n(z)\bigr)
\Bigr)
\le C_0 u^2 v_n .
\]
\end{assumption}

\begin{assumption}[Soft-max penalty control]
There exist sequences \(a_n>0\) and \(\varepsilon_n\downarrow0\) such that, for all sufficiently large \(n\) and every admissible codebook \(z=(\mathcal P,q,\theta)\in\mathcal C_n\), the quantity
\[
M_n(z)
:=
\sup_{x\in\mathcal X_n}
\Lambda_{\mathcal P,q,\theta}(x)
\]
is finite, and there exists a point \(x^\star_n=x^\star_n(z)\in\mathcal X_n\) satisfying
\[
\Lambda_{\mathcal P,q,\theta}(x^\star_n)
\ge
M_n(z)-\varepsilon_n,
\qquad
-\log r_n(x^\star_n)\le a_n .
\]
\end{assumption}
Assumptions 1-2 need only hold for asymptotically optimal codebooks; uniformity over the entire admissible class is adopted for simplicity.
\begin{proof}
Fix an admissible codebook $(\mathcal P,q,\theta)\in\mathcal C_n$, and write
\[
\Lambda_n := \Lambda_{\mathcal P,q,\theta}(X_n),
\qquad
m_n := \mathbb E_{r_n}[\Lambda_n].
\]

\textit{1) Small-risk regime.}
By definition,
\[
\mathcal I_{n,\tau_n}(\mathcal P,q,\theta)
=
\frac{1}{\tau_n}\log \mathbb E_{r_n} e^{\tau_n \Lambda_n}
=
m_n + \frac{1}{\tau_n}\log \mathbb E_{r_n} e^{\tau_n(\Lambda_n-m_n)}.
\]
If $\tau_n\to 0$ and $\tau_n v_n\to 0$, then for all sufficiently large $n$ we have $0\le \tau_n\le u_0$, and Assumption~1 yields
\[
\log \mathbb E_{r_n} e^{\tau_n(\Lambda_n-m_n)}
\le
C_0 \tau_n^2 v_n.
\]
Consequently,
\[
0
\le
\mathcal I_{n,\tau_n}(\mathcal P,q,\theta)-m_n
\le
C_0 \tau_n v_n
\to 0,
\]
which proves
\[
\mathcal I_{n,\tau_n}(\mathcal P,q,\theta)
=
\mathbb E_{r_n}[\Lambda_{\mathcal P,q,\theta}(X_n)]
+o(1)
\]
uniformly over admissible codebooks. The uniform \(o(1)\) bound implies asymptotic optimality of entropic minimizers for the ordinary SMML criterion. Indeed, if \(z_{n,\tau_n}^*\) minimizes \(\mathcal I_{n,\tau_n}\) and \(z_n^*\) minimizes \(\mathcal I_n\), then
\[
\mathcal I_n(z_{n,\tau_n}^*)
\le
\mathcal I_{n,\tau_n}(z_{n,\tau_n}^*)+o(1)
\le
\mathcal I_{n,\tau_n}(z_n^*)+o(1)
\le
\mathcal I_n(z_n^*)+o(1),
\]
and hence
\[
\mathcal I_n(z_{n,\tau_n}^*)-\inf_{z\in\mathcal C_n}\mathcal I_n(z)=o(1).
\]

\medskip

\textit{2) Large-risk regime.}
Let
\[
M_n(z)
:=
\sup_{x\in \mathcal X_n}
\Lambda_{\mathcal P,q,\theta}(x),
\qquad z=(\mathcal P,q,\theta).
\]
By Assumption~2, \(M_n(z)<\infty\), and there exists
\(x_n^\star\in\mathcal X_n\) satisfying
\[
\Lambda_{\mathcal P,q,\theta}(x_n^\star)
\ge M_n(z)-\varepsilon_n,
\qquad
-\log r_n(x_n^\star)\le a_n .
\]
Then
\begin{align*}
\mathcal I_{n,\tau_n}(\mathcal P,q,\theta)
&=
\frac{1}{\tau_n}
\log
\sum_{x\in \mathcal X_n}
r_n(x)e^{\tau_n \Lambda_{\mathcal P,q,\theta}(x)} \\
&=
M_n(\mathcal P,q,\theta) 
+
\frac{1}{\tau_n}
\log
\sum_{x\in \mathcal X_n}
r_n(x)e^{\tau_n(\Lambda_{\mathcal P,q,\theta}(x)-M_n(\mathcal P,q,\theta))}.
\end{align*}
Since $\Lambda_{\mathcal P,q,\theta}(x)-M_n(\mathcal P,q,\theta)\le 0$ for all $x$, the sum inside the logarithm is at most $1$, implying
\[
\mathcal I_{n,\tau_n}(\mathcal P,q,\theta)\le M_n(\mathcal P,q,\theta).
\]
On the other hand, retaining only the near-maximizing point \(x^\star_n\) from Assumption 2 yields
\[
\mathcal I_{n,\tau_n}(\mathcal P,q,\theta)
\ge
M_n(\mathcal P,q,\theta)-\varepsilon_n
+
\frac{1}{\tau_n}\log r_n(x^\star_n).
\]
By Assumption 2, \(-\log r_n(x^\star_n)\le a_n\), and therefore
\[
0
\le
M_n(\mathcal P,q,\theta)-\mathcal I_{n,\tau_n}(\mathcal P,q,\theta)
\le
\varepsilon_n+\frac{a_n}{\tau_n}.
\]
If \(a_n/ \tau_n \to 0\), it follows that
\[
\mathcal I_{n,\tau_n}(\mathcal P,q,\theta)
=
\sup_{x\in \mathcal X_n} \Lambda_{\mathcal P,q,\theta}(x)
+o(1)
\]
uniformly over admissible codebooks. This shows that the large-\(\tau_n\) regime reduces entropic SMML to a
worst-case codelength criterion on \(\mathcal X_n\). The corresponding regret-centered formulation recovers the NML principle under the standard Shtarkov finite-complexity condition and when the admissible coding class ranges over all probability assignments on \(\mathcal X_n\).
\end{proof}
%
%
%
%
\bibliographystyle{unsrt}
\bibliography{bibliography}  
\end{document}